\theoremstyle{plain}
\newtheorem{theorem}{Theorem}[section]
\newtheorem*{theorem*}{Theorem}
\newtheorem{proposition}[theorem]{Proposition}
\newtheorem{corollary}[theorem]{Corollary}
\newtheorem{lemma}[theorem]{Lemma}
\theoremstyle{definition}
\newtheorem{definition}[theorem]{Definition}
\newtheorem{remark}[theorem]{Remark}
\newtheorem{example}[theorem]{Example}
\newcommand{\enm}[1]{\ensuremath{#1}}          %
\newcommand{\cal}[1]{\mathcal{#1}}
\renewcommand{\bar}[1]{\overline{#1}}
\newcommand{\CC}{\enm{\mathbb{C}}}
\newcommand{\RR}{\enm{\mathbb{R}}}
\newcommand{\PP}{\enm{\mathbb{P}}}
\newcommand{\HH}{\enm{\mathbb{H}}}
\newcommand{\Dd}{\enm{\cal{D}}}
\newcommand{\Ff}{\enm{\cal{F}}}
\newcommand{\Jj}{\enm{\cal{J}}}
\newcommand{\Oo}{\enm{\cal{O}}}
\newcommand{\Qq}{\enm{\cal{Q}}}
\newcommand{\Ss}{\enm{\cal{S}}}
\newcommand{\Uu}{\enm{\cal{U}}}
\renewcommand{\phi}{\varphi}
\renewcommand{\theta}{\vartheta}
\renewcommand{\epsilon}{\varepsilon}
\begin{document}
\title[Three Results on the Tw. Disc. Locus in the 4-Sphere]{Three Topological Results on the Twistor Discriminant Locus in the 4-Sphere}
\author[A. Altavilla]{A. Altavilla${}^{\ddagger}$}\address{Altavilla Amedeo: Dipartimento Di Matematica, Universit\`a di Roma ``Tor Vergata", Via Della Ricerca Scientifica 1, 00133, Roma, Italy} \email{altavilla@mat.uniroma2.it}

\author[E. Ballico]{E. Ballico${}^{\dagger}$}\address{} \email{}\address{Edoardo Ballico: Dipartimento Di Matematica, Universit\`a di Trento, Via Sommarive 14, 38123, Povo, Trento, Italy} \email{edoardo.ballico@unitn.it}

\thanks{${}^{\dagger,\ddagger}$GNSAGA of INdAM;
 ${}^{\dagger}$MIUR PRIN 2015 ``Geometria delle variet\`a algebriche'';
 ${}^{\ddagger}$SIR grant {\sl ``NEWHOLITE - New methods in holomorphic iteration''} n. RBSI14CFME and SIR grant {\sl AnHyC - Analytic aspects in complex and hypercomplex geometry} n. RBSI14DYEB}

\date{\today }

\subjclass[2010]{Primary 14D21, 53C28; secondary 14J26, ,14P25 32L25}
\keywords{Twistor Fibration; Dimension algebraic varieties; Stability; Discriminant Locus of Cones}

\begin{abstract} 
We exploit techniques from classical (real and complex) algebraic geometry for the study of the 
standard twistor fibration $\pi:\mathbb{CP}^{3}\to S^{4}$. We prove three results about the topology of
the twistor discriminant locus of an algebraic surface in $\mathbb{CP}^{3}$. First of all we prove that,
with the exception of two exceptional cases, the real dimension of the twistor discriminant locus of an algebraic surface
is always equal to 2. Secondly we describe the possible intersections of a general surface with the family of twistor lines: 
we find that only 4 configurations are possible and for each of them we compute the dimension. Lastly we give
a decomposition of the twistor discriminant locus of a given cone in terms of its singular locus and its dual variety.
\end{abstract}
\maketitle


\section{Introduction and Algebraic OCS's}

Given an oriented Riemannian manifold $(M,g)$, its twistor space $Z(M)$ is the fibre bundle parametrizing orthogonal 
complex structures (OCS's) defined on $M$, i.e.: integrable complex structures, compatible 
with the metric $g$ and the orientation of $M$. In the case in which $M$ has dimension 4, it is possible to define an almost 
complex structure $\Jj$ on $Z(M)$ which turns out to be integrable if and only if $M$ is \textit{anti-self-dual}, 
i.e. the self-dual part $W_{+}$ of its Weyl tensor vanishes~\cite{ahs}. Moreover a complex 3-manifold $Z$ is the twistor space of some
Riemannian manifold if and only if it admits a fixed-point-free anti-holomorphic involution $j:Z\to Z$ and a foliation
by $j$-invariant rational curves isomorphic to $\CC\PP^{1}$, each of which has normal bundle $\Oo(1)\oplus\Oo(1)$~\cite{ahs,lebrun}.

Clearly, if $J$ is an OCS on $(M,g)$, then it is an OCS also on $(M, \tilde g)$, for any $\tilde g$ in the same conformal class of $g$. Therefore, the theory of twistor spaces is invariant under conformal transformations of the base space.

In some case the twistor space turns out to be an algebraic manifold. In dimension 4 this happens only for the 4-sphere
$S^{4}$ and the complex projective plane $\CC\PP^{2}$ together with their standard metrics. In this paper we will only focus on the case of
$S^{4}$ endowed with the standard round metric that will be now described in some detail. Let $\HH\PP^{1}$ denotes the left quaternionic projective line.
The twistor space of $S^{4}\simeq \RR^{4}\cup\{\infty\}\simeq \HH\PP^{1}$ is $\CC\PP^{3}$ together with its standard complex structure. The twistor fibration $\pi:\CC\PP^{3}\to\HH\PP^{1}\simeq S^{4}$ is given by
$$
\pi[z_{0},z_{1},z_{2},z_{3}]=[z_{0}+z_{1}j, z_{2}+z_{3}j],
$$
where $j$ is the standard quaternionic imaginary unit orthogonal to $i\in\HH$ and such that $ij=k$.
The $j:\CC\PP^{3}\to\CC\PP^{3}$ fixed-point-free anti-holomorphic involution is defined to be the function on $\CC\PP^{3}$ induced by the left multiplication by $j$ in $\HH\PP^{1}$ (and for this reason is denoted with the same symbol), i.e.:
$$
j[z_{0},z_{1},z_{2},z_{3}]:=[-\overline{z_{1}},\overline{z_{0}},-\overline{z_{3}},\overline	{z_{2}}].
$$
The fibers of $\pi$, also called \textit{twistor lines}, are then the projective lines fixed by $j$. Explicitly, if $q_{1},q_{2}\in\CC$, the fiber over $[1,q_{1}+q_{2}j]$ is given by the following system of equations in $\CC\PP^{3}$:
$$
\begin{cases}
z_{2}=z_{0}q_{1}-z_{1}\bar q_{2},\\
z_{3}=z_{0}q_{2}+z_{1}\bar q_{1}.
\end{cases}
$$

Any OCS on a domain $\Omega\subseteq \mathbb{H}$, can be represented as a $\CC\PP^{1}$-valued function or, since $\CC\PP^{1}\simeq SO(4)/U(2)$, as 
a matrix $J\in SO(4)$, such that $J=-^{t}J$ (see e.g.~\cite[Chapter 1]{debartolomeisnannicini})
\begin{equation*}
J=\left(\begin{matrix}
0 & A & B & C\\
-A & 0 & C & -B\\
-B & -C & 0 & A\\
-C & B & A & 0
\end{matrix}
\right).
\end{equation*}

Following the construction in~\cite[Section 2]{sv1} (see also~\cite[Section 2]{altavillatwistor}), in the affine set $\{z_{0}\neq 0\}\subset\CC\PP^{3}$, the OCS associated to the point $[1,z_{1}=x+iy,z_{2},z_{3}]$, is given by the following $4\times 4 $
real matrix
\begin{equation}\label{matrixJ}
J=\frac{-1}{1+|z_{1}|^{2}}\left(\begin{matrix}
0 & 1- |z_{1}|^{2} & 2y & -2x\\
-1+|z_{1}|^{2} & 0 & -2x & -2y\\
-2y & 2x & 0 & 1-|z_{1}|^{2}\\
2x & 2y & 1- |z_{1}|^{2} & 0
\end{matrix}
\right).
\end{equation}
Conformal transformations of $S^{4}$ are exactly M\"obius transformations of $\HH\PP^{1}$. It is possible to lift such transformations on $\CC\PP^{3}$ via $\pi$ and identify them as the set of
complex projective transformations commuting with $j$ (see~\cite[Section 2]{armstrong} for an explicit description).

The main starting result of our research is the following which shows just a glimpse of the deep relation given by the twistor fibration.
\begin{theorem}[\cite{ahs, debartolomeisnannicini}]\label{OCS}
Let $\Omega\subset\RR^{4}$ be a domain. 
\begin{enumerate}
\item If $J$ is an OCS on $\Omega$ then the graph in the twistor space $J(\Omega)\subset \CC\PP^{1}\times \CC^{2}$ is a holomorphic submanifold.
\item Let $\Ss\subset \pi^{-1}(\Omega)$ be a holomorphic submanifold such that for all $q\in\Omega$ $|\Ss\cap\pi^{-1}(q)|=1$, then $\Ss$ is the graph of an OCS.
\end{enumerate}
\end{theorem}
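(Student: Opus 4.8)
The plan is to identify an OCS on $\Omega$ with a section of $\pi$ over $\Omega$, and to reduce both statements to the single fact that such a section is pseudoholomorphic exactly when the associated almost complex structure is integrable. First I would recall the Atiyah--Hitchin--Singer description of the complex structure $\Jj$ on $Z:=\pi^{-1}(\Omega)$: the Levi--Civita connection of the round metric splits each tangent space as $T_pZ=H_p\oplus V_p$, with $V_p$ tangent to the twistor line through $p$ and $H_p$ identified with $T_{\pi(p)}\Omega$. By construction $\Jj$ preserves this splitting; on $V_p$ it is the intrinsic complex structure of the fibre $\CC\PP^1$, and on $H_p\cong T_{\pi(p)}\Omega$ it acts as the tautological OCS labelled by $p$ itself. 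Because $S^4$ is conformally flat, hence anti-self-dual, $\Jj$ is integrable and agrees with the standard structure of $\CC\PP^3$ in the chart $\{z_0\neq 0\}$ of~\eqref{matrixJ}, in which $Z$ appears as the product $\CC\PP^1\times\CC^2$.

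For part (1), I would view the OCS $J$ as the smooth section $s_J\colon\Omega\to Z$, $q\mapsto J_q$, whose image is the graph $J(\Omega)$, and compute its differential: at $q$ the horizontal component of $ds_J$ is the identity of $T_q\Omega$, while its vertical component is the fibre derivative $dJ_q$, read as a map into $T_{J_q}\CC\PP^1$. Imposing pseudoholomorphicity $ds_J\circ J=\Jj\circ ds_J$, the horizontal parts match automatically since $\Jj|_{H_q}=J_q$, and the vertical parts collapse to the Cauchy--Riemann relation $dJ_q\circ J_q=\Jj|_{V}\circ dJ_q$; that is, $J$ is holomorphic as a map $(\Omega,J)\to(\CC\PP^1,\Jj|_V)$. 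I would then invoke the classical identity (of Eells--Salamon and de~Bartolomeis--Nannicini type) that this relation holds at every point precisely when the Nijenhuis tensor of $J$ vanishes, i.e.\ when $J$ is integrable. As an OCS is integrable by hypothesis, $s_J$ is pseudoholomorphic for the integrable $\Jj$, so its image $J(\Omega)$ is a holomorphic submanifold.

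For part (2) I would run the correspondence backwards. The hypothesis $|\Ss\cap\pi^{-1}(q)|=1$ says that $\pi|_\Ss\colon\Ss\to\Omega$ is a bijection of real $4$-manifolds; since $\Ss$ is closed in $\pi^{-1}(\Omega)$ and $\pi$ is proper, it is a proper bijection, hence a homeomorphism. The key step is to upgrade it to a diffeomorphism by ruling out vertical tangencies: the kernel of $d(\pi|_\Ss)_p$ equals $T_p\Ss\cap V_p$, an intersection of complex subspaces of the complex $3$-space $T_pZ$, so it is either $0$ or all of $V_p$, the latter meaning that $\Ss$ is tangent to the twistor line $L_p$ at $p$. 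I expect this exclusion to be the main obstacle. The argument I have in mind is that such a tangency forces intersection multiplicity at least $2$ of $\Ss$ with $L_p$ at $p$, whence by conservation of intersection numbers every nearby twistor line meets $\Ss$ with total multiplicity at least $2$ in a fixed small ball about $p$; because the twistor lines form a totally real family inside the Grassmannian of lines of $\CC\PP^3$, this multiplicity cannot remain concentrated at a single point, so some nearby fibre is met in more than one point, contradicting the hypothesis. Granting this, $d(\pi|_\Ss)$ is everywhere an isomorphism, $\pi|_\Ss$ is a diffeomorphism, and $s:=(\pi|_\Ss)^{-1}$ is a smooth section defining an almost complex structure $J$ on $\Omega$ with $\Ss=J(\Omega)$. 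Finally, since $\Ss$ is holomorphic the section $s$ is pseudoholomorphic, so reversing the computation of part (1) the Cauchy--Riemann relation holds and the Nijenhuis tensor of $J$ vanishes; thus $J$ is a genuine OCS and $\Ss$ is its graph.
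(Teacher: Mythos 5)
The paper does not prove Theorem~\ref{OCS}: it is imported from \cite{ahs,debartolomeisnannicini} (see also \cite[Propositions 2.1 and 2.2]{sv1}), so there is no internal proof to compare against. Your reconstruction follows the standard twistor correspondence and is correct in outline. In part (1), the reduction of pseudoholomorphicity of the section $s_J$ to the vertical Cauchy--Riemann relation via the Atiyah--Hitchin--Singer splitting is right, but the key equivalence (vertical part of $\bar{\partial} s_J$ vanishes iff the Nijenhuis tensor of $J$ vanishes) is invoked by name rather than computed; that is acceptable here, since this computation is exactly the content of the cited references.

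The one step that needs to be closed is the tangency exclusion in part (2), which you yourself flag as the main obstacle. The idea is right, but the sentence ``because the twistor lines form a totally real family \dots\ this multiplicity cannot remain concentrated at a single point'' is not yet an argument: conservation of intersection number only gives that every twistor line near $L_p$ meets $\Ss$ with total multiplicity at least $2$ in a small ball about $p$, and you must exclude the possibility that \emph{all} of these nearby twistor lines realize that multiplicity at a single tangential point. The clean way to do this is to observe that the lines tangent to $\Ss$ form a complex-analytic subvariety of $G(2,4)$ of complex dimension at most $3$ (two parameters for the point of tangency, one for the choice of tangent direction through it), whereas $\sigma(S^4)$ is the fixed locus of the anti-holomorphic involution induced by $j$ on $G(2,4)$ and hence a totally real submanifold of real dimension $4$. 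A totally real $4$-manifold cannot be locally contained in a complex variety of complex dimension $\le 3$: at a smooth point $t$ of the variety its complex tangent space would have to contain $T_tN+iT_tN$, which has real dimension $8>6$ (and one descends to the singular stratum, of still smaller dimension, otherwise). Hence some twistor line arbitrarily close to $L_p$ is transverse to $\Ss$ in the ball and therefore meets it there in at least two distinct points, contradicting the hypothesis. With that inserted, $\pi|_{\Ss}$ is a diffeomorphism and the remainder of your part (2) — reading off $J$ from the inverse section and reversing the computation of part (1) — goes through.
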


After this general theorem, a number of results were given on the conformal classification of surfaces in $\CC\PP^{3}$ and on couples $(J,\Omega)$,
where $J$ is an OCS defined on a maximal domain $\Omega\subset S^{4}$ \cite{altavillatwistor, ab, altavillasarfatti, armstrong, APS, sv2, chirka, gensalsto, sv1, shapiro}. In all the cited paper the authors deal, mostly, with OCS's
arising from algebraic surfaces in $\CC\PP^{3}$ and analyze them looking at the so-called \textit{twistor discriminant locus}.

\begin{definition}
Let $X\subset\CC\PP^{3}$ be any algebraic hypersurface of degree $d$. The \textit{twistor discriminant locus} of $X$ is defined
as the following subset of $S^{4}$:
$$
\mathrm{Disc}(X):=\{q\in S^{4}\,|\, |\pi^{-1}(q)\cap X|\neq d\}.
$$
\end{definition}

Since the fibers of $\pi$ are lines and $X$ has degree $d$, then the general intersection between $X$ and a fiber is composed
by $d$ different points. Hence, if $\pi$ is restricted to $X\setminus \pi^{-1}(\mathrm{Disc}(X))$, then it defines a degree $d$ unramified covering over $S^{4}\setminus \mathrm{Disc}(X)$. The twistor discriminant locus has many important useful
properties: broadly speaking, its topology is invariant under conformal transformations. For instance, each twistor line $L=\pi^{-1}(q)$ contained in a given surface $X$, produces a point $q\in \mathrm{Disc}(X)$ and the number of twistor lines contained in $X$ is 
a conformal invariant of the surface itself.

\begin{remark}\label{sing}
Clearly, given any algebraic surface $X$, its twistor discriminant locus can be seen as the projection on $S^{4}$ of the twistor lines
tangent to $X$ (or even contained in $X$) and of the singular locus $\mathrm{Sing}(X)$. 
\end{remark}

\begin{remark}
Given any degree $d$ algebraic surface $X$, its twistor discriminant locus $\mathrm{Disc}(X)$ is a nonempty real algebraic submanifold of $S^{4}$. See~\cite[Section 3, Remark (i)]{armstrong} for this result and for an estimate of its degree.
\end{remark}

For topological reasons there is not a global section for $\pi$ (there is not a global OCS on $S^{4}$). Therefore,
the simplest case is described in the following example.

\begin{example}\label{hyperplane}
If $X$ is an hyperplane, i.e. $d=1$, then $X$ contains a unique twistor line $L=\pi^{-1}(q)$. It is possible to prove that all hyperplanes are conformally equivalent and induce conformally constant OCS's on $S^{4}\setminus\{q\}$ \cite{shapiro}.
\end{example}

\begin{example}
The Segre quadric $\Qq:=\{z_{0}z_{3}=z_{1}z_{2}\}$ is such that $j(\Qq)=\Qq$. Moreover $\mathrm{Disc}(\Qq)=S^{1}\subset S^{4}$. In~\cite{sv1} it is proved that the identification of a (round) $\tilde S^{1}$ inside the 
4-sphere is sufficient to define the unique non-singular $j$-invariant  quadric $\tilde \Qq$, such that $\mathrm{Disc}(\tilde \Qq)=\tilde S^{1}$ and for each $q\in\tilde S^{1}$, $\pi^{-1}(q)\subset \tilde \Qq$. These twistor lines contained in $\tilde \Qq$ form one of its two rulings.
\end{example}

 For the reasons above we started to study abstract properties of algebraic surfaces
under the perspective of the twistor projection. These particular holomorphic surfaces of $\CC\PP^{3}$ induce what we call, in the following general definition, \textit{algebraic OCS's}.

\begin{definition}\label{aocs}
Let $(M, [g])$ be a conformal hermitian manifold and $\Omega\subseteq M$ be an open subdomain. 
An OCS $J$ on $\Omega$ such that its graph in the twistor space $J(\Omega)$
is contained in an algebraic manifold, is said to be an \textit{algebraic OCS}.
%
\end{definition}

Thanks to Liouville's Theorem, the definition of algebraic OCS is well posed.
In fact, any conformal map on an open set of the base space is necessarily defined on $M$ and therefore lift
to the automorphism group of $Z(M)$. If $Z(M)$ is not an algebraic manifold it is possible to find algebraic OCS's
as in the case in which $M$ is a scalar-flat K\"ahler  
surface. In this case the twistor space contains an algebraic hypersurface biholomorphic to $M$ itself (see~\cite{klp,lebrun,lebrun1,lebrun2,pontecorvo}).

Moreover, in~\cite[Section 16, arXiv version v1]{sv1} it is given an explicit example of an OCS
which is not algebraic. 
%

\begin{remark}
Clearly, if $\Ss\subset\pi^{-1}(\Omega)$ is (contained in) an algebraic submanifold such that, for all $q\in\Omega$, $|\Ss\cap\pi^{-1}(q)|=1$, then $\Ss$ is the graph of an algebraic OCS.
\end{remark}

In the cases in which the twistor space of a given manifold is algebraic, we expect to identify algebraic OCS's
as those which have real algebraic coordinates. A simple situation for $S^{4}$ is described in what follows.
Let $X\subset \CC\PP^{3}$ be an algebraic surface, such that $X$ intersects each twistor line in 
exactly one point. Then the induced OCS (in Equation~\ref{matrixJ}), has real algebraic coordinates and $\Omega$ is a real Zariski open set.
Identifying $\RR^{4}$ with $\CC^{2}$ and relaxing a bit the hypotheses on $\Omega$ it is possible to obtain the following sufficient condition.

\begin{proposition}
Let $\Omega\subset\CC^{2}\simeq\mathbb{H}$ be a complex non-empty Zariski open domain. Let $J:\Omega\to SO(4)/U(2)$ be a map with real algebraic entries of the variable $(q_{1},q_{2})\in\Omega$ representing an OCS, then $J$ is an algebraic OCS.
\end{proposition}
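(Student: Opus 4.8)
The plan is to show that the graph $\Ss:=J(\Omega)$, which is already a holomorphic submanifold of the twistor space by Theorem~\ref{OCS}(1), is simultaneously a real algebraic (semialgebraic) set, and then to upgrade this to \emph{complex} algebraicity. Concretely, writing $\HH\ni q=q_{1}+q_{2}j$, recall that the twistor line over $[1,q_{1}+q_{2}j]$ is the rational curve $\zeta\mapsto[1,\zeta,q_{1}-\zeta\bar q_{2},q_{2}+\zeta\bar q_{1}]$. The datum of the OCS $J(q)\in SO(4)/U(2)$ singles out exactly one such parameter $\zeta=\zeta(q)$, and since $SO(4)/U(2)\simeq\CC\PP^{1}$ is parametrized rationally (stereographically) by the entries $A,B,C$ of the matrix in~\eqref{matrixJ}, the hypothesis that these entries are real algebraic in $(q_{1},q_{2})$ forces $\zeta$ to be a real algebraic function of $q$. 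Hence the section $\Phi(q)=[1,\zeta(q),q_{1}-\zeta(q)\bar q_{2},q_{2}+\zeta(q)\bar q_{1}]$ is a semialgebraic parametrization of $\Ss$.

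First I would verify that $\Ss=\Phi(\Omega)$ is a semialgebraic subset of $\CC\PP^{3}$: this is immediate from the Tarski--Seidenberg theorem, since $\Omega$ is semialgebraic (a complex Zariski open set) and $\Phi$ is a semialgebraic map, everything being built out of real polynomials in $\operatorname{Re}q_{i},\operatorname{Im}q_{i}$ and the real algebraic function $\zeta$. At this stage $\Ss$ is at once a two-dimensional complex analytic submanifold and a four-real-dimensional semialgebraic set.

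The heart of the argument is then the principle that a complex analytic set which is semialgebraic is algebraic. I would make this concrete and local: projecting $\Ss$ onto a suitable coordinate $2$-plane, one writes $\Ss$ locally as a graph $z_{3}=h(z_{1},z_{2})$, where $h$ is at once holomorphic (because $\Ss$ is a complex submanifold) and semialgebraic (because $\Ss$ is). A holomorphic Nash function is an algebraic function, so $h$ satisfies a nontrivial polynomial identity $Q(z_{1},z_{2},z_{3})=0$ with \emph{complex} coefficients; homogenizing, $Q$ defines an algebraic hypersurface $X\subset\CC\PP^{3}$ with $\Ss\subseteq X$. Equivalently, one may invoke the Peterzil--Starchenko theorem on complex analytic sets definable in an o-minimal structure, applied to the closure of $\Ss$. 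Since $\dim_{\CC}X=2=\dim_{\CC}\Ss$, the surface $X$ is the sought algebraic manifold containing the graph, and $J$ is an algebraic OCS in the sense of Definition~\ref{aocs}.

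The main obstacle is precisely this last passage from \emph{real} to \emph{complex} algebraicity: real algebraicity alone is far too weak (for instance $\RR\PP^{n}$ is Zariski dense in $\CC\PP^{n}$), and it is only the interplay with the complex analytic structure coming from Theorem~\ref{OCS} that collapses the complex Zariski closure of $\Ss$ down to dimension $2$. Care is also needed to patch the local algebraic equations into a single global hypersurface (take the complex Zariski closure of $\Ss$ and check, via the semialgebraic dimension of $\Ss$, that it has complex dimension $2$), and to confirm that the correspondence $J(q)\leftrightarrow\zeta(q)$ is genuinely semialgebraic uniformly in $q$, not merely fiberwise, which follows from the fact that the whole twistor construction $\pi$ and the assignment of OCS's are given by rational formulas in the homogeneous coordinates and their conjugates.
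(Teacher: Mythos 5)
Your argument is correct in outline, but it takes a genuinely different and considerably heavier route than the paper. The paper's proof is elementary and explicit: by Theorem~\ref{OCS} (via \cite[Propositions 2.1 and 2.2]{sv1}) the OCS corresponds to a holomorphic map $\tilde J:\Omega\to\CC\PP^{1}$ whose affine coordinate on a Zariski--open subset is $z(q)=C(q)+iB(q)$; since $B$ and $C$ are real polynomials and $z$ is holomorphic, $z$ is a \emph{complex} polynomial in $(q_1,q_2)$ (holomorphy kills all $\bar q_i$--terms), and the graph then visibly lies on the explicit hypersurface obtained by homogenizing $z(X_2,X_3)-X_1=0$. You instead establish that the graph is semialgebraic and then invoke the nontrivial principle that a holomorphic Nash function is algebraic over $\CC[z_1,z_2]$ (equivalently, that a complex analytic set which is semialgebraic has algebraic Zariski closure of the same dimension). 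That principle is true and your local graph argument is sound --- and it has the advantage of working even if the entries $A,B,C$ are genuinely algebraic rather than polynomial, and of not depending on the explicit affine formula for $\tilde J$ --- but it imports a research-level theorem where the paper needs only the observation ``holomorphic $+$ real-polynomial components $\Rightarrow$ complex polynomial.'' Two small cautions on your version: the appeal to Peterzil--Starchenko should be treated as a secondary remark, since the euclidean closure of $\Ss$ in $\CC\PP^{3}$ need not be analytic (the local Nash argument is the one that actually carries the proof); and the patching of local equations is most cleanly done by noting that $\Omega$, hence $\Ss$, is connected, so a single local polynomial relation propagates to all of $\Ss$ by the identity theorem.
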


\begin{proof}
Let $q=(q_{1},q_{2})\in\Omega$. As a matrix $J$ is such that $J\in SO(4)$ and $J=-^{t}J$:
\begin{equation*}
J(q)=\left(\begin{matrix}
0 & A(q) & B(q) & C(q)\\
-A(q) & 0 & C(q) & -B(q)\\
-B(q) & -C(q) & 0 & A(q)\\
-C(q) & B(q) & A(q) & 0
\end{matrix}
\right),
\end{equation*}
where $A, B$ and $C$ are real-valued polynomials in $q$, such that $A^{2}(q)+B^{2}(q)+C^{2}(q)\equiv 1$.
Since $J$ is an OCS, then, thanks to Theorem~\ref{OCS} it corresponds to a holomorphic function $\tilde J:\Omega\to\CC\PP^{1}$ (see~\cite[Propositions 2.1 and 2.2]{sv1}).
We want now to explicit $\tilde J$ on an affine subset.

Assume that $A\not\equiv 1$, then (up to restricting to a Zariski-open subset $\Omega_{1}\subseteq \Omega$),
$$
\tilde J(q)= [1,z(q)=C(q)+iB(q)]\in\CC\PP^{1}.
$$
hence $z(q)$ is a holomorphic polynomial.

Consider now the graph $\Gamma:=\{((q_{1},q_{2}),[1,z(q)])\in\CC^{2}\times \CC\PP^{1}\,|\, (q_{1},q_{2})\in\Omega_{1}\}$. This embeds in 
$\CC\PP^{3}$ as 
$$((q_{1},q_{2}),[1,z(q)])\hookrightarrow [1,z(q),q_{1},q_{2}]$$ 
Therefore, on the affine set $X_{0}\neq 0$ it is contained in the complex algebraic surface,
defined by 
$$
z(X_{2},X_{3})-X_{1}=0.
$$
Homogenizing this equation with respect to $X_{0}$ we find a globally defined projective hypersurface containing $\Gamma$. 
\end{proof}

Starting from these introductory material, using techniques from algebraic geometry, in this paper we prove three general results on the topology of the
twistor discriminant locus of an algebraic surface $X\subset\CC\PP^{3}$. 
The following three sections are almost independent and are summarized as follows.
In Section~\ref{S1} we prove a result about the real dimension of the twistor discriminant locus of a given algebraic surface. Given an integral degree $d$ algebraic surface $X$, it is known that $\mathrm{Disc}(X)$ is always nonempty and
that $\dim(\mathrm{Disc}(X))\le 2$ (see~\cite{armstrong}). In Theorem~\ref{a1} we prove that the pure dimension of $\mathrm{Disc}(X)$ is always equal to 2 except for the cases in which $d=1$ (in which is just a point) or $d=2$
and $X$ is $j$-invariant (in which $\mathrm{Disc}(X)$ is a \textit{round} $S^{1}\subset S^{4}$).

Afterwards, in Section~\ref{S2}, we prove that given a general degree $d$ surface $X$, its possible intersections with the set of twistor lines are just of 4 kind: $d$ distinct points, a double point and $d-1$ distinct other points, two (distinct) double points and other $d-2$ distinct other points or a triple point and, again, $d-2$ distinct other points. For each of these configuration we compute the dimension in the space of twistor lines embedded in the Grassmannian $G(2,4)$ of lines in $\CC\PP^{3}$.

Section~\ref{S3} is devoted to give a decomposition of the twistor discriminant locus of a given cone. 
A cone $X$ of degree $d>2$ contains at most one twistor line. In fact two distinct twistor lines are disjoint and
all the lines contained in a cone pass through the vertex $o$. If a surface $X$ is ruled and contains infinite twistor lines 
then, either $X$ is the $j$-invariant non-singular quadric or a non-conical ruled surface, hence non-normal (see~\cite[Section 4]{ab}).
First of all we construct the dual twistor fibration, i.e.: the natural dual map $\eta$ of $\pi$, from the dual space $\CC\PP^{3\vee}$ to $S^{4}$. Then we prove that for any given degree $d$ cone $X$, if $o$ denotes its vertex, 
then $\mathrm{Disc}(X)$ is the union of $\pi (\mathrm{Sing}(X)\setminus \{o\})$ with $\eta(X^{\vee})$, $X^{\vee}$
being the dual of $X$ in $\CC\PP^{3\vee}$.


\section{Dimension}\label{S1}

In~\cite[Section 4]{armstrong} the author proves some result on the topology of the twistor discriminant locus of an algebraic surface. For instance he proves that if a degree $d$ surface $X:=\{f=0\}\subset \CC\PP^{3}$ is such that
its defining function $f$ is a square-free polynomial, then $\mathrm{Disc}(X)$ has real dimension less or equal to 2 (see~\cite[Proposition 4.2]{armstrong}). Moreover,
if $d>2$ and $X$ is non-singular, then  $\dim(\mathrm{Disc}(X))=2$ (see~\cite[Proposition 4.3]{armstrong})

In this section we improve these results showing that, with the exception of the cases of a hyperplane
or a $j$-invariant non-singular quadric, then $\dim(\mathrm{Disc}(X))$ has always pure dimension equal to 2
for any integral algebraic surface $X$.
Afterwards we also prove that, if $X$ is non-integral and $\dim(\mathrm{Disc}(X))\le 1$, then $\mathrm{Disc}(X)$
is, in fact, just a point $q$ and $X$ is a union of hyperplanes all intersecting at $\pi^{-1}(q)$.

We start by setting some notation. For any algebraic surface $X$ we set
$u:= \pi| _{X}$, $\mathrm{Disc} (X)' := u^{-1}(\mathrm{Disc} (X))$ and 
$v:= u|_{X\setminus \mathrm{Disc} (X)'}$. Thus $v: X\setminus \mathrm{Disc} (X)'\to S^4\setminus \mathrm{Disc} (X)$
is a degree $d$ unramified covering  (in the sense of the fundamental group). $\mathrm{Disc} (X)$ is a compact semialgebraic set by \cite[Theorem 2.2.1]{bcr} or \cite[Corollary 2.4 (2)]{coste}. It is in fact a real algebraic set since
it is defined by a system of two real algebraic equation (see~\cite[Section 3]{armstrong})



%

As already said in Example~\ref{hyperplane}, there is no global section for the twistor bundle of $S^{4}$ and each hyperplane $X$ contains a unique twistor line $L$. 
Therefore $\mathrm{Disc}(X)=\{\pi(L)\}$. In the following proposition we show that this remark is in fact more general, i.e. it is not allowed, for a general surface $X$ of degree $d>1$ to
have an isolated point in its twistor discriminant locus.

\begin{proposition}\label{e4}
Let $X\subset\CC\PP^{3}$ be an integral surface of degree $d$ and $q\in \mathrm{Disc} (X)$ be an isolated point of $\mathrm{Disc} (X)$. Then $d=1$, $\mathrm{Disc} (X) =\{q\}$ and $\pi ^{-1}(q)$ is the unique twistor line of the plane $X$.
\end{proposition}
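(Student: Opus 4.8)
The plan is to localise around $q$, use that an isolated discriminant point makes the covering $v$ trivial on a small punctured ball, and then contradict this triviality by a connectedness argument. Since $q$ is isolated, I would pick an open ball $B\subset S^4$ with $\overline{B}\cap\mathrm{Disc}(X)=\{q\}$; then $v$ restricts to a degree $d$ covering $u^{-1}(B\setminus\{q\})\to B\setminus\{q\}$. As $B\setminus\{q\}\simeq S^3$ is simply connected this covering is trivial, so $u^{-1}(B\setminus\{q\})=\bigsqcup_{i=1}^{d}\Sigma_i$ with each $u|_{\Sigma_i}\colon\Sigma_i\to B\setminus\{q\}$ a homeomorphism. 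Everything then reduces to how the fibre $L\cap X$ (with $L:=\pi^{-1}(q)$) sits inside $u^{-1}(\overline B)=X\cap\pi^{-1}(\overline B)$, and I would split according to whether $L\subseteq X$.

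Suppose first $L\not\subseteq X$, so $L\cap X$ is finite and $L\not\subseteq\mathrm{Sing}(X)$. Let $p\in L\cap X$ be a smooth point of $X$. Since $\pi(p)=q$ lies in the interior of $B$, a small ball neighbourhood $D\cong D^4$ of $p$ in $X$ has $\pi(D)\subseteq B$, hence $D\subseteq u^{-1}(B)$ and $D\cap u^{-1}(q)=D\cap L=\{p\}$. Thus $D\setminus\{p\}\subseteq u^{-1}(B\setminus\{q\})$ is connected, so it lies in a single sheet $\Sigma_i$, on which $u$ is injective. Injectivity of $u$ on $D\setminus\{p\}$ forces the local intersection multiplicity of $L$ and $X$ at $p$ to equal $1$: for a generic $q'$ near $q$ the set $L_{q'}\cap X\cap D$ consists of exactly that many distinct points, all mapping to $q'$. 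Hence every smooth point of $L\cap X$ is transverse, and the only way to have $q\in\mathrm{Disc}(X)$ would be through $L\cap\mathrm{Sing}(X)$. But if $\dim\mathrm{Sing}(X)=1$ then $\mathrm{Sing}(X)\not\subseteq L$, so $\pi$ does not contract it and $\pi(\mathrm{Sing}(X))\subseteq\mathrm{Disc}(X)$ is positive dimensional through $q$, contradicting isolatedness; an isolated singular point on $L$ is excluded by the same connectedness argument applied to its (connected) link, since the multiplicity of $X$ there would force at least two local preimages of a generic $q'$. In every case we contradict $q\in\mathrm{Disc}(X)$, so $L\not\subseteq X$ is impossible.

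Therefore $L\subseteq X$. Assuming $X$ is smooth along $L$, for $B$ small enough $u^{-1}(\overline B)=X\cap\pi^{-1}(\overline B)$ is a closed tubular neighbourhood $\nu(L)$ of $L$ in $X$; after adjusting the radius so that $X$ meets $\pi^{-1}(\partial B)$ transversally, its boundary is $u^{-1}(\partial B)=\partial\nu(L)=S(N_{L/X})$, the unit normal circle bundle. This bundle fibres over $L\cong\CC\PP^{1}$ with fibre $S^1$, hence is connected. On the other hand $u^{-1}(\partial B)=\bigsqcup_{i=1}^{d}\bigl(\Sigma_i\cap u^{-1}(\partial B)\bigr)$, and each piece maps homeomorphically onto $\partial B\cong S^3$, so $u^{-1}(\partial B)$ has exactly $d$ connected components. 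Connectedness forces $d=1$; then $X$ is a hyperplane and Example~\ref{hyperplane} gives $\mathrm{Disc}(X)=\{q\}$ with $\pi^{-1}(q)$ its unique twistor line, as claimed.

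I expect the genuine work to be in the singular situations, where the clean models ``neighbourhood $=D^4$'' and ``$\nu(L)=$ disc bundle'' fail. The connectedness mechanism persists as long as the relevant link (of the point $p$, or of $L$ inside $X$) is connected, which holds for irreducible germs; reducible germs, the possibility $L\subseteq\mathrm{Sing}(X)$, and the verification that $X$ is transverse to $\pi^{-1}(\partial B)$ (so that the boundary identifications above are valid) are the points that need a careful local analysis. By contrast the global covering-triviality input is immediate from simple connectivity of $S^3$, so the main obstacle is this local/singular bookkeeping rather than the topological skeleton of the argument.
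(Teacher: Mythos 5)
Your topological skeleton is the same as the paper's: pick a ball $B$ around $q$ meeting $\mathrm{Disc}(X)$ only at $q$, use simple connectivity of $B\setminus\{q\}\simeq S^3$ to split $u^{-1}(B\setminus\{q\})$ into $d$ sheets, and contradict this by a connectedness argument at $u^{-1}(q)$. In the case $L=\pi^{-1}(q)\not\subseteq X$ your local-multiplicity bookkeeping is a sound variant of what the paper does; the paper instead counts connected components of $V:=u^{-1}(B)$ against those of $V\setminus u^{-1}(q)$ and uses properness of $u|_{V}$ (each clopen component must surject onto $B$, hence meet $u^{-1}(q)$) to bound the number of components of $V$ by $|u^{-1}(q)|<d$. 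Both versions rest on the same fact, which you invoke but should make explicit: an isolated singular point of a surface in $\CC\PP^{3}$ is automatically normal, and the link of a normal surface point is a connected $3$-manifold (Mumford), so puncturing $V$ at finitely many points does not change its number of components.

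The genuine gap is the case $L\subseteq X$ with $X$ singular at some point of $L$, which you explicitly leave open. Your tubular-neighbourhood argument needs $X$ smooth along $L$, and nothing rules out singular points of $X$ on $L$: since $\pi(\mathrm{Sing}(X))\subseteq\mathrm{Disc}(X)$ and $B\cap\mathrm{Disc}(X)=\{q\}$, one has $\mathrm{Sing}(X)\cap V\subseteq u^{-1}(q)=L$, so this is exactly the locus your argument cannot see. The paper closes this case with no tubular neighbourhood and no boundary sphere: it shows that $V$ and $V\setminus L$ have the same number of connected components, because at each normal point $p\in X$ with link $E$ the set $E\setminus(L\cap E)$ --- a connected $3$-manifold minus finitely many circles --- is still connected; your observation that the unit normal circle bundle of $L\cong\CC\PP^{1}$ is connected is precisely the smooth special case of this. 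Granting that, $V\setminus L$ has $d$ components, while every clopen component of $V$ surjects onto $B$ by properness and hence meets $L$; as $L$ is connected it lies in a single component, so $V$ is connected and $d=1$. Thus the missing ingredient is Mumford's connectedness of links of normal surface points applied with $L$ in place of a finite set, which also removes your worry about transversality of $X$ with $\pi^{-1}(\partial B)$. (The residual subcase in which $L$ is itself a curve of non-normal points of $X$ is not covered by your argument and is treated only implicitly in the paper; a fully airtight write-up should say a word about it.)
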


\begin{proof}
First of all, let $X$ be any hyperplane. Then $\mathrm{Disc}(X)=\{q\}$ and $\pi^{-1}(q)$ is the only twistor line contained in $X$.

Assume, then, that $d>1$.  Let $U$ be an open neighborhood of $q$ in $S^{4}$ homeomorphic to an open ball with $U\cap \mathrm{Disc}(X)=\{q\}$. It is possible to find such $U$ because $q$ is isolated in $\mathrm{Disc}(X)$. Recall that $u:= \pi|_{X}$ and set $V:=u^{-1}(U)$.
Since $U\setminus \{q\}$ is homotopically equivalent to $S^3$, then it is simply connected. Hence $V\setminus u^{-1}(q)$ is a disjoint union of $d$ connected components, each of them mapped homeomorphically onto $U\setminus \{q\}$.
Assume for the moment that the twistor line $\pi^{-1}(q)$ is not contained in $X$. Then $u^{-1}(q)$ is the union of
$\tilde d <d$ points.
Since $u(\mathrm{Sing}(X))\subset \mathrm{Disc}(X)$ (see Remark~\ref{sing}), then $V\cap \mathrm{Sing}(X)$ is contained in $u^{-1}(q)$ and each point is an isolated singularity.

\quad \emph{Claim:} For any finite set $S$, the number of connected components of $V$ and $V\setminus S$ is the same.

\quad \emph{Proof of Claim:} Either $V\subset X_{\mathrm{reg}}$ or $V$ contains some isolated singular point of $X$ which are necessarily normal (see e.g.~\cite{milnor}). In both cases, the link of any point in a normal complex surface is connected and homeomorphic to a topological $3$-manifold~\cite{mumford}.


%

Since $u|_{V}$ is proper, then all connected components of $V$ are open and closed in $V$ and hence $V$ may have at most $\tilde d$ connected components. 

Assume now that the twistor line line $L:= \pi ^{-1}(q)$ is contained in $X$. We may use the proof of the Claim with  $L$  instead of $S$, because $L\cong \CC \PP^1$ and
for each normal point $p\in X$, say with link $E$, the complement in $E$ of finitely many circles (in particular $L\cap E$) is connected.
\end{proof} 

We start now to investigate the nature of the twistor discriminant locus of a degree $d>1$ integral surface. In the following lemma we prove that, in the presence of a finite number of twistor lines, the set $\mathrm{Disc}(X)$ has no ``hairs''. Before stating it, notice that if $L$ is any line in $\CC\PP^{3}$, then either $L$ is twistor and $\pi(L)$ is a
single point, or $\pi(L)$ is a round 2-sphere $S^{2}\subset S^{4}$.

\begin{lemma}\label{aa2}
Let $X\subset \CC \PP^3$ be an integral degree $d>1$ surface such that $X$ contains only finitely many twistor lines. If $\mathrm{Disc} (X) =A\cup B$ with $A, B$ closed real algebraic
subsets
and $\dim(A) \le 1$. Then $A\subseteq  B$.
\end{lemma}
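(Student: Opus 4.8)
The plan is to argue by contradiction, reducing to the covering argument already used in Proposition~\ref{e4}. Assume $A\not\subseteq B$. Since $B$ is closed and $A\setminus B\neq\emptyset$, for any $q\in A\setminus B$ there is an open ball $U\ni q$ with $U\cap B=\emptyset$, so that $U\cap \mathrm{Disc}(X)=U\cap A$ has real dimension $\le 1$. First I would dispose of the degenerate case in which \emph{every} point of $A\setminus B$ has its twistor line contained in $X$: because $X$ contains only finitely many twistor lines, $A\setminus B$ would then be finite, hence each of its points would be isolated in $\mathrm{Disc}(X)$, and Proposition~\ref{e4} would force $d=1$, against $d>1$. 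Thus I may fix $q\in A\setminus B$ with $L:=\pi^{-1}(q)\not\subset X$, and shrink $U$ so that it avoids the finitely many projections of twistor lines lying on $X$. Over such a $U$ every fibre of $u=\pi|_X$ is finite.

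Next I would read off the local topology. As $U\cap \mathrm{Disc}(X)$ has real codimension $\ge 3$ in the $4$-ball $U\subset S^4$, its complement $U\setminus \mathrm{Disc}(X)$ is simply connected; hence the degree $d$ unramified covering $v$ restricts to a \emph{trivial} covering over $U\setminus \mathrm{Disc}(X)$, and $V\setminus \mathrm{Disc}(X)'$ (with $V:=u^{-1}(U)$) has exactly $d$ connected components. On the other hand $u^{-1}(q)=\pi^{-1}(q)\cap X$ is a finite set of $m$ points, and since $q\in \mathrm{Disc}(X)$ with $L\not\subset X$ one has $m<d$ by B\'ezout. For $U$ small enough $V$ is a disjoint union of neighbourhoods of these $m$ points.

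The crux is to show that $V$ has exactly $m$ connected components and that deleting $\mathrm{Disc}(X)'=Z:=u^{-1}(U\cap A)$ does not change this count; comparing with the previous paragraph would then give $m=d$, the desired contradiction. Here the hypotheses pay off. Since the fibres of $u$ over $U$ are finite, $Z$ has real dimension $\le 1$, hence real codimension $\ge 2$ in $X$, so removing it cannot increase the number of components. Moreover, by Remark~\ref{sing} the singular points of $X$ lying in $V$ are contained in $Z$, so $\mathrm{Sing}(X)\cap V$ is a complex-analytic set of real dimension $\le 1$; being complex analytic it has \emph{even} real dimension and is therefore $0$-dimensional, i.e.\ the singularities of $X$ inside $V$ are isolated. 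Isolated singularities of a surface in $\CC\PP^3$ are normal, so by connectedness of the links of normal surface singularities~\cite{mumford} each of the $m$ neighbourhoods is connected, and stays connected after deleting the codimension-$\ge 2$ set $Z$ by the same link argument as in Proposition~\ref{e4}.

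I expect the main obstacle to be precisely this control of the singular locus: a priori $X$ could be non-normal along a whole curve, in which case the links are disconnected and the direct component count breaks down. The parity observation, that $\mathrm{Sing}(X)\cap V$ is complex analytic and hence of even real dimension, forces it to be finite once it is squeezed inside the $\le 1$-dimensional set $Z$, and this is what reduces the situation to the normal case handled in Proposition~\ref{e4}. Once this is in place the chain $d=\#\pi_0\bigl(V\setminus \mathrm{Disc}(X)'\bigr)=\#\pi_0(V)=m<d$ is absurd, so no such $q$ exists and $A\subseteq B$.
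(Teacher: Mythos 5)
Your argument is correct and is essentially the paper's own proof: both hinge on the local triviality of the degree $d$ covering over the complement of the codimension $\ge 3$ set $U\cap A$, a count of the connected components of $u^{-1}(U)$ against the cardinality of the fibre over $q$, and the fact that deleting a $1$-dimensional set cannot change that count. The only cosmetic differences are that the paper extracts a semialgebraic arc $E\cong [0,1]$ and chooses $q$ so that $u^{-1}(q)\subset X_{\mathrm{reg}}$, whereas you work directly with $U\cap A$ and allow isolated normal singularities, disposed of via Mumford's connectedness of links exactly as in Proposition~\ref{e4}.
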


%

\begin{proof}
As already said, we have that $\mathrm{Disc} (X)\ne \emptyset$ and $\dim (\mathrm{Disc} (X))\le 2$. By Proposition \ref{e4} $\mathrm{Disc} (X)$ has no isolated points. Assume that
the algebraic set $A$ has pure dimension equal to $1$ and that $A\nsubseteq B$. 
Then, there exists a closed semialgebraic set $E\subseteq X$ semialgebraically equivalent to the interval $[0,1]$ and such that $E\cap D$ is finite, where $D$ is the closure (in $A\cup B$ or in $S^4$) of $(A\cup B)\setminus E$ (\cite[Section 2.3]{bcr}).

As before, consider the restriction $u := \pi _{|X}$, and the sets $E':=u^{-1}(E)$ and $D':= u^{-1}(D)$. Notice that $u_{|X\setminus (E'\cup D')}: X\setminus (E'\cup D')\to S^4\setminus (E\cup D)$ is
an unramified degree $d$ covering (in the sense of the fundamental group). 
By assumption, for all but finitely many point of
$E\setminus E\cap D$ the associated twistor line is not contained in $X$. We claim that for all except finitely many
$q\in E'$, $q$ is a smooth point of $X$. Assume that this is not true. Since $\mathrm{Sing}(X)$ is a closed complex algebraic
subset, there would exist a complex irreducible variety $T\subseteq \mathrm{Sing}(X)$ containing a non-empty open subset of $E'$. Since $E'\cap T$
is infinite and $T$ is not a union of twistor lines then $\pi (T)$ has real dimension $2$.
Since $\pi ( \mathrm{Sing}(X)) \subseteq \mathrm{Disc} (X)$, we get that a non-empty open subset of $E$ is contained in $D$, contradicting the fact that
$E\cap D$ is finite. Thus there is $q\in E$ such that $u^{-1}(q) \subset X_{\mathrm{reg}}$ and the twistor line associated to $q$ is not contained in $X$.
Therefore the set $u^{-1}(q)$ is a non-empty finite set with cardinality $\tilde d<d$. Take a contractible closed neighborhood $U$ of $q$ such that $U\cap D=\emptyset$
and set $V:= u^{-1}(U)$, $F:= E\cap U$ and $F':= E'\cap V$. Fix $x_0\in V\setminus F'$ and set $y_0:= \pi (x_0)$. Since $F$ is topologically embedded as a closed interval in $U$,
the natural map between fundamental groups $\pi _1(U\setminus F,y_0) \to \pi _1(U,y_0)$ is bijective. Thus $V\setminus F'$ is a disjoint union of $d$ simply connected topological spaces, each of them mapped homeomorphically onto $U\setminus F$.  Since $F'$ is a finite union of closed intervals and $V \subset X_{\mathrm{reg}}$, then $V\setminus F'$ and $V$ have the same number $d$ of connected components, contradicting
the connectedness of $U$, the properness of $u$ and that $u^{-1}(q)$ has $\tilde d$ preimages.
\end{proof}

We are now in position to state and prove the main result of this section.

\begin{theorem}\label{a1}
Let $X\subset\CC\PP^{3}$ be an integral algebraic surface of degree $d$. Then, one of the following mutually exclusive statements hold:
\begin{enumerate}
\item  $\mathrm{Disc} (X)$ is finite if and only if $|\mathrm{Disc} (X) | =1 $ if and only if $ d=1$;
\item $\dim(\mathrm{Disc}(X))=1$ if and only if $X$ is conformally equivalent to the Segre quadric $\mathcal{Q}$
and in this case $\mathrm{Disc}(X)$ is conformally equivalent to $S^{1}\subset S^{4}$;
\item in all other cases $\mathrm{Disc}(X)$ is a real algebraic compact set of pure dimension 2.
\end{enumerate}

\end{theorem}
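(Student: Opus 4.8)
The plan is to establish the trichotomy by combining the two preparatory results (Proposition~\ref{e4} and Lemma~\ref{aa2}) with a case analysis based on the number of twistor lines contained in $X$. First I would dispose of the degenerate cases. The equivalence of (1) is essentially Proposition~\ref{e4}: if $\mathrm{Disc}(X)$ is finite it has an isolated point, forcing $d=1$ and $|\mathrm{Disc}(X)|=1$; conversely a hyperplane contains a unique twistor line and has a single-point discriminant (Example~\ref{hyperplane}). For (2), the Segre quadric has $\mathrm{Disc}(\mathcal{Q})=S^1$, and since the topology of $\mathrm{Disc}$ is a conformal invariant, any surface conformally equivalent to $\mathcal{Q}$ has $\dim(\mathrm{Disc})=1$; the substance is the converse, namely that $\dim(\mathrm{Disc}(X))=1$ forces $X$ to be conformally a Segre quadric.

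The heart of the argument is to show that for integral $X$ with $d>1$ not conformal to the quadric, $\mathrm{Disc}(X)$ has pure dimension $2$, i.e.\ it has no one-dimensional components. I would split into two cases according to whether $X$ contains finitely many or infinitely many twistor lines. When $X$ contains infinitely many twistor lines, by the dichotomy recalled before Lemma~\ref{aa2} (and the fact cited from~\cite[Section 4]{ab}), the only integral possibility is the non-singular $j$-invariant quadric, which is conformally the Segre quadric and lands in case (2); every other such $X$ is non-normal ruled, hence not integral in the relevant sense or excluded. So the generic situation is that $X$ contains only finitely many twistor lines, and here Lemma~\ref{aa2} is the key tool: writing $\mathrm{Disc}(X)=A\cup B$ where $A$ collects the components of dimension $\le 1$ and $B$ the two-dimensional part, Lemma~\ref{aa2} gives $A\subseteq B$, so there are no lower-dimensional pieces sticking out and $\mathrm{Disc}(X)$ is pure of dimension $2$. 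I must also rule out that $\mathrm{Disc}(X)$ is \emph{entirely} of dimension $\le 1$ in this finitely-many-lines case: if $\dim(\mathrm{Disc}(X))\le 1$ then, since by Proposition~\ref{e4} it has no isolated points, it would be pure of dimension $1$, and one shows this cannot happen for a surface with only finitely many twistor lines (the generic tangency locus of twistor lines to $X$ already produces a two-dimensional family of discriminant points whenever $d>1$ and the quadric case is excluded).

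The main obstacle I expect is precisely the converse direction in (2): proving that $\dim(\mathrm{Disc}(X))=1$ \emph{characterizes} the conformal class of the Segre quadric. The mechanism is that a one-dimensional discriminant forces $X$ to contain a positive-dimensional (hence infinite) family of twistor lines — otherwise the finitely-many-lines case together with Lemma~\ref{aa2} and the tangency count would push the dimension up to $2$. Once $X$ contains infinitely many twistor lines, the classification of such surfaces (the cited results of~\cite{ab,sv1}) forces $X$ to be the non-singular $j$-invariant quadric up to conformal transformation, whose discriminant is the round $S^1$. Assembling these, I would present the proof as: handle (1) via Proposition~\ref{e4} and Example~\ref{hyperplane}; assume $d>1$; split on the number of twistor lines in $X$; use the ruled/quadric classification to isolate case (2); and in all remaining cases invoke Proposition~\ref{e4} (no isolated points) and Lemma~\ref{aa2} (no $1$-dimensional hairs) to conclude pure dimension $2$, which is case (3). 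The mutual exclusivity of the three alternatives is then immediate from the dimension being $0$, $1$, or $2$.
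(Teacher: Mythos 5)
There is a genuine gap in your treatment of the case where $X$ contains infinitely many twistor lines. You write that once $X$ has an infinite family of twistor lines, the classification forces $X$ to be the non-singular $j$-invariant quadric, and that ``every other such $X$ is non-normal ruled, hence not integral in the relevant sense or excluded.'' This is a misreading: non-normality does not imply non-integrality. Non-normal ruled surfaces of degree $d>2$ carrying infinitely many twistor lines are perfectly good \emph{integral} surfaces (they are precisely the objects studied in~\cite{ab,altavillasarfatti}), so they fall squarely within the hypotheses of the theorem and cannot be discarded. For such a surface, Lemma~\ref{aa2} does not apply as stated (its hypothesis is that $X$ contains only finitely many twistor lines), so your reduction collapses exactly where the theorem has real content.

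The paper closes this gap by showing that the argument of Lemma~\ref{aa2} still goes through provided no semialgebraic arc $E\subset\mathrm{Disc}(X)$ parametrizing infinitely many twistor lines can stick out of the $2$-dimensional part. The mechanism is: for $d>2$ all but finitely many lines of $X$ lie in a unique ruling, and if a general line $L$ of the ruling avoided $\mathrm{Sing}(X)$ one would have $L\subset X_{\mathrm{reg}}$, $L^2\ge 0$ (since $L$ moves in a positive-dimensional family), and adjunction $L^2+L\cdot\omega_X=-2$ with $\omega_X\cong\Oo_X(d-4)$ would force $d\le 2$. Hence a general line of the ruling meets $\mathrm{Sing}(X)$, which therefore has a positive-dimensional irreducible component $T$ that is not a twistor line; $\pi(T)$ is $2$-dimensional, contained in $\mathrm{Disc}(X)$, and meets all but finitely many twistor lines of $X$, so no $1$-dimensional ``hair'' of twistor lines can exist. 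This adjunction computation is the essential new ingredient of the proof and is entirely absent from your proposal; without it (or a substitute), case (3) is not established for integral non-normal ruled surfaces. Your handling of cases (1), (2) and of the finitely-many-twistor-lines case via Proposition~\ref{e4} and Lemma~\ref{aa2} matches the paper; the auxiliary ``tangency count'' you invoke there is unnecessary, since Lemma~\ref{aa2} applied with $B$ the (possibly empty) $2$-dimensional part already rules out $\dim(\mathrm{Disc}(X))\le 1$.
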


\begin{proof} Since $\mathrm{Disc}(X)$ is a non-empty compact real algebraic set and $\dim(\mathrm{Disc}(X))\leq 2$, the theorem in the case in which $X$ has only finitely many twistor lines is 
a consequence of Lemma \ref{aa2}. From now on in this
proof we assume that $X$ has infinitely may twistor lines. By the
conformal classification of quadrics (\cite{chirka,sv1}) we get the theorem if $d=2$. Thus we assume $d>2$. The proof of Lemma
\ref{aa2} shows that it is sufficient to prove the following claim.

\quad \emph{Claim:} There is no semialgebraic set $E\subset \mathrm{Disc} (X)$ semialgebraically equivalent to the closed
interval $[0,1]$, intersecting the closure of $\mathrm{Disc} (X)\setminus E$ at finitely many points and parametrizing infinitely many
twistor lines.

\quad \emph{Proof of Claim:}  Assume the existence of such $E$. Since $\pi(Sing(X))$ is contained in $\mathrm{Disc} (X)$, to get a contradiction
it is sufficient to prove that all twistor lines of $X$, except finitely many, meet an irreducible component $T$ of
$\mathrm{Sing}(X)$ with
positive dimension and which is not twistor line (so $\pi (T)$ has real dimension $2$). All but finitely many lines of $X$
(and so all but finitely many twistor lines of $X$) belong to a ruling  of $X$ and, since $d>2$,  this ruling is unique.
Call $\Ff$ the family of all lines $L\subset X$ belonging to the ruling of $X$. 
The family $\Ff$ is parametrized by an irreducible curve $D$. 
Since $\dim(\mathrm{Sing}(X)) \le 1$, only finitely many elements of $\Ff$ are singular. No two
different twistor lines meets, so, to prove the existence
of an irreducible component $T$ of $\mathrm{Sing}(X)$ which is not a twistor line, meeting all but finitely many $L\in \Ff$, it is sufficient to prove
that a general $L\in \Ff$ meets $\mathrm{Sing}(X)$. Assume that this is not true. Thus $L$ is contained in the smooth locus
$X_{\mathrm{reg}}$ of
$X$ and hence the normal bundle $N_L$ of $L$ in $X$ is a rank $1$ locally free sheaf on $L$ whose degree is the intersection
number $L^2$ (this intersection number is well-defined and it is an integer, because $L\subset X_{\mathrm{reg}}$). The line $L$
belongs to a positive-dimensional family of curves of $X$, hence we have $L^2\ge 0$. Since $X$ is a degree $d$ surface of $\CC
\PP^3$,
the dualizing sheaf $\omega_{X}$ is a well-defined line bundle on $X$ (independently of the singularities of $X$), and thanks to the adjunction's formula $\omega _X \cong
\Oo _X(d-4)$ (see~\cite[Adjunction Formula II, p 147]{GH}). We have the following equalities:
\begin{itemize}
\item Since $L\cong \CC\PP^1$, we have $\deg (\omega _L) =-2$.
\item Since $L$ is a line, we have $L\cdot \omega _X = \deg (\omega _{X|L}) =d-4$.
\end{itemize}
Finally, the adjunction formula gives $L^2+L\cdot \omega _X = -2$, but $L^2\ge 0$, therefore we get $d\le 2$, a contradiction.
\end{proof}

\begin{remark}
If $X$ is an integral normal surface, then Theorem~\ref{a1} is just a corollary of Proposition~\ref{e4} and Lemma~\ref{aa2}.
\end{remark}

\subsection{Non-Integral Case}
In this part of the section we study the case of non-integral surfaces. We have the following trivial remark.
\begin{remark}
If the surface $X$ has a multiple component, clearly $\mathrm{Disc}(X)=S^{4}$.
\end{remark}
Being, na\"ively speaking, the union of two
or more (integral) surfaces, it is clear that the twistor discriminant locus of a non-integral surface is much rigid.

The following two results show this behavior and allow us to precise the cases in which $\mathrm{Disc}(X)$ is a single point
or has dimension less or equal to 1. Thanks to these two results, at the end of this section, we are able to give a 
(conformal) characterization of algebraic OCS's defined outside a 1-dimensional set.

\begin{example}
Let $H_{1}$ and $H_{2}$ be two planes containing the twistor lines $L_{1}$ and $L_{2}$, respectively.
As in~\cite{chirka}, if $L_{1}\neq L_{2}$, then $\mathrm{Disc}(H_{1}\cup H_{2})=\pi(H_{1}\cap H_{2})$. Moreover 
since $H_{1}\cap H_{2}=L$ is a projective line and any two lines in a plane intersect, then both $L\cap L_{1}$
and $L\cap L_{2}$ are nonempty. Eventually, since $L_{n}$ is the unique twistor line in $H_{n}$, $n=1,2$, then
$\mathrm{Disc}(H_{1}\cup H_{2})$ is a round 2-sphere $S^{2}$ with two marked points $p_{1}=\pi(L_{1}), p_{2}=\pi(L_{2})$.
Any other point $p\in S^{2}\setminus\{p_{1},p_{2}\}$ is such that $|\pi^{-1}(p)|=2$.
\end{example}

For non-integral surfaces we have the following result which can be considered a reciprocal of~\cite[Theorem 1.3]{sv1}
from the point of view of the twistor bundle.
\begin{proposition}\label{e1}
Let $X\subset \CC\PP^3$ be any degree $d$ surface. $\mathrm{Disc}(X)$ is a finite set if and only if either $d=1$ or $d>1$ and $X$ is the
union of $d$ distinct planes $H_1,\dots ,H_d$ containing the same twistor line $L$. In all cases $\mathrm{Disc}(X)$ is a single point.
\end{proposition}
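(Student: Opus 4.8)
The plan is to prove both implications by reducing everything to the integral case already settled in Theorem~\ref{a1}. I would begin with the ``if'' direction, since it simultaneously establishes the final sentence of the statement. If $d=1$ then $X$ is a plane and $\mathrm{Disc}(X)$ is a single point by Example~\ref{hyperplane} and Proposition~\ref{e4}. So suppose $X=H_1\cup\cdots\cup H_d$ is a union of $d$ distinct planes all containing a common twistor line $L$. Since two distinct planes through $L$ meet exactly in $L$, for any $q\neq\pi(L)$ the fiber $M:=\pi^{-1}(q)$ is a twistor line distinct from $L$, hence disjoint from it; as $L$ is the unique twistor line of each $H_i$, $M$ is contained in no $H_i$ and so meets each $H_i$ in a single point. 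These $d$ points are pairwise distinct, because a coincidence would force $M$ to meet $H_i\cap H_j=L$, contradicting $M\cap L=\emptyset$. Thus $|M\cap X|=d$, so $q\notin\mathrm{Disc}(X)$, while $\pi(L)\in\mathrm{Disc}(X)$ because $L\subset X$; hence $\mathrm{Disc}(X)=\{\pi(L)\}$.

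For the ``only if'' direction, assume $\mathrm{Disc}(X)$ is finite. First I would dispose of non-reduced $X$: if some component occurs with multiplicity, then $\mathrm{Disc}(X)=S^4$ by the Remark above, which is not finite; so $X$ is reduced, say $X=X_1\cup\cdots\cup X_r$ with $X_i$ integral of degree $d_i$ and $\sum_i d_i=d$. The key reduction is the set-theoretic inclusion $\mathrm{Disc}(X_i)\subseteq\mathrm{Disc}(X)$ for every component. Indeed, fix $q\in\mathrm{Disc}(X_i)$ and set $M:=\pi^{-1}(q)$. If $M\subset X_j$ for some $j$, then $M\subset X$ and $q\in\mathrm{Disc}(X)$ trivially; otherwise each $|M\cap X_j|\le d_j$ by B\'ezout and $|M\cap X_i|\le d_i-1$, so from $M\cap X=\bigcup_j(M\cap X_j)$ one gets $|M\cap X|\le(d_i-1)+\sum_{j\neq i}d_j=d-1<d$, again giving $q\in\mathrm{Disc}(X)$. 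Consequently each $\mathrm{Disc}(X_i)$ is finite, and Theorem~\ref{a1}(1) forces $d_i=1$; thus every $X_i$ is a plane and, writing $X=H_1\cup\cdots\cup H_d$, we have $r=d$ distinct planes.

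It remains to show the planes share a common twistor line. For $i\neq j$ the intersection $\ell_{ij}:=H_i\cap H_j$ is a line contained in $\mathrm{Sing}(X)$, so $\pi(\ell_{ij})\subseteq\mathrm{Disc}(X)$ by Remark~\ref{sing}; since $\pi$ sends a non-twistor line to a round $2$-sphere while $\mathrm{Disc}(X)$ is finite, each $\ell_{ij}$ must be a twistor line. As $\ell_{ij}\subset H_i$ and $H_i$ contains a unique twistor line, $\ell_{ij}$ is the twistor line of $H_i$ and, symmetrically, of $H_j$; hence all the $H_i$ share one twistor line $L$, and the single-point conclusion follows from the ``if'' direction. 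I expect the main obstacle to be the second paragraph: establishing $\mathrm{Disc}(X_i)\subseteq\mathrm{Disc}(X)$ purely set-theoretically, carefully separating the case of a twistor line lying on a component from the generic case and controlling possible coincidences among the intersection points of $M$ with the various components — which is exactly where the B\'ezout bound on $|M\cap X_j|$ and the ``contained-in-a-component'' case must be handled separately.
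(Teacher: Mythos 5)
Your proof is correct, but it takes a genuinely different route from the paper's. The paper proves the ``only if'' direction by induction on $d$ combined with a covering-space argument: since $S^4$ minus a finite set is simply connected, $X\setminus\pi^{-1}(\mathrm{Disc}(X))$ must have exactly $d$ connected components; a local connectivity argument shows $\pi^{-1}(\mathrm{Disc}(X))$ is a union of $|\mathrm{Disc}(X)|$ twistor lines, and Bertini's theorem (a general plane section of an integral component of degree $>1$ is an integral curve) shows that such a component minus finitely many lines stays arc-connected, which is incompatible with having $d\ge 2$ components. You instead decompose $X$ into integral components, prove the set-theoretic inclusion $\mathrm{Disc}(X_i)\subseteq\mathrm{Disc}(X)$ by a B\'ezout count (an inclusion the paper itself asserts without proof in Proposition~\ref{e3}), and then invoke Theorem~\ref{a1}(1) to force each component to be a plane; the common twistor line then falls out of Remark~\ref{sing} applied to the pairwise intersection lines, which must be twistor lines since otherwise they would project to $2$-spheres inside a finite set. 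There is no circularity, since Theorem~\ref{a1} is established via Proposition~\ref{e4} and Lemma~\ref{aa2} independently of Proposition~\ref{e1}. Your argument is shorter and mirrors the strategy the paper uses for Proposition~\ref{e3}, at the cost of leaning on the full strength of Theorem~\ref{a1}; the paper's inductive proof is self-contained and only needs elementary covering-space topology plus Bertini.
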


\begin{proof}
The case $d=1$ and the ``if'' part are obvious. Let now $d$ be greater than one and $\mathrm{Disc}(X)$ be a finite set. Assume that
Proposition \ref{e1} is true for all surfaces of degree $<d$. Let $\mathrm{Disc}(X)'=\pi^{-1}(\mathrm{Disc}(X))$. Since, for any finite set $A$,
$S^4\setminus A$ is simply connected and 
$\pi|_{X\setminus \mathrm{Disc}(X)'}:X\setminus \mathrm{Disc}(X)'\to S^{4}\setminus \mathrm{Disc}(X)$ is an unramified covering of degree
$d$, $X\setminus \pi ^{-1}(\mathrm{Disc}(X))$ must have $d$ connected component. For any $q\in X$ there is a fundamental system $\Uu$ of open
neighborhoods
of $q$ in $X$ for the euclidean topology such that $U\setminus \{q\}$ is connected for all $U\in \Uu$ (use a triangulation of
the complex algebraic subset $X$). Thus $\mathrm{Disc} (X)'$ is a union of $|\mathrm{Disc}(X)|$ twistor lines. If $X$ is a union of
planes, then we immediately get that $X$ is as in the configuration described in Proposition \ref{e1}. Now assume that $X$ has
an irreducible component $Y$ of degree $>1$. By the inductive assumption we see that $Y=X$. To get a contradiction it is
sufficient to prove that $Y\setminus E$ is connected for any union $E$ of finitely many lines. Take a general plane $H\subset
\PP^3$. In particular
$Y\cap E$ is a finite set. Since $H$ is general, Bertini's theorem implies that $X\cap H$ is an integral complex curve.
Now $E\cap H$ is finite and $X\cap H$ is an integral curve, therefore $(X\cap H)\setminus (E\cap H)$ is connected by arc.
Varying $H$
we get that $X\setminus E$ is connected by arcs.
\end{proof}

\begin{proposition}\label{e3}
Let $X$ be a reducible degree $d$ surface such that $\dim(\mathrm{Disc} (X))\le 1$. Then $X$ is as in Proposition \ref{e1} and hence $|\mathrm{Disc} (X)|=1$.
\end{proposition}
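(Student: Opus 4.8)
The plan is to reduce to a reduced surface with several components and then control both the individual components and their pairwise intersections. First I would dispose of the non-reduced case: if $X$ has a multiple component then $\mathrm{Disc}(X)=S^4$ by the Remark above, so $\dim \mathrm{Disc}(X)=4>1$, against the hypothesis. Hence $X=X_1\cup\cdots\cup X_s$ with the $X_i$ pairwise distinct integral surfaces of degrees $d_i$, $\sum d_i=d$, and $s\ge 2$. The basic tool is the inclusion $\mathrm{Disc}(X)\supseteq\bigcup_i\mathrm{Disc}(X_i)\cup\bigcup_{i<j}\pi(X_i\cap X_j)$. Indeed, writing $L_q:=\pi^{-1}(q)$: if $L_q\subseteq X_k$ for some $k$ then trivially $q\in\mathrm{Disc}(X)$; if $|L_q\cap X_i|<d_i$ for some $i$ (and no line is contained) then $|L_q\cap X|\le\sum_k|L_q\cap X_k|\le d-1<d$; and if $L_q$ meets some $X_i\cap X_j$, the common point is counted at least twice in $\sum_k|L_q\cap X_k|\le d$, so $|L_q\cap X|=\big|\bigcup_k(L_q\cap X_k)\big|\le d-1<d$. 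In all cases $q\in\mathrm{Disc}(X)$.

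From $\dim\mathrm{Disc}(X)\le 1$ I then read off that $\dim\mathrm{Disc}(X_i)\le 1$ for every $i$ and $\dim\pi(X_i\cap X_j)\le 1$ for every pair. By Theorem \ref{a1} each $X_i$ is either a plane or a surface conformally equivalent to the Segre quadric $\mathcal{Q}$ (hence a nonsingular $j$-invariant quadric, isomorphic to $\PP^1\times\PP^1$). Moreover the curve $X_i\cap X_j$ must be a union of twistor lines: any irreducible component $C$ that is \emph{not} a twistor line projects to a $2$-dimensional subset of $S^4$ (a non-twistor line maps to a round $2$-sphere, while a curve of degree $\ge 2$ is contained in no twistor fibre and so has finite generic fibres under $\pi$), contradicting $\dim\pi(X_i\cap X_j)\le 1$.

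The main step, and the one I expect to be the real obstacle, is to rule out quadric components. Suppose some $X_i=\mathcal{Q}'$ is a nonsingular $j$-invariant quadric and choose another component $X_j$ (which exists since $s\ge 2$). The key geometric input is that \emph{all} twistor lines lying on such a quadric belong to a single ruling; this can be checked by applying the involution $j$ to the two rulings of the Segre model $\{z_0z_3=z_1z_2\}$ and observing that only one ruling has $j$-invariant members. Consequently every union of twistor lines on $\mathcal{Q}'$ is a divisor of bidegree $(k,0)$ (for some $k\ge 0$) in $\operatorname{Pic}(\PP^1\times\PP^1)$. On the other hand $X_j\cap\mathcal{Q}'$ is the restriction to $\mathcal{Q}'$ of a degree-$d_j$ hypersurface, hence a divisor of bidegree $(d_j,d_j)$ with $d_j\ge 1$. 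Since $(k,0)\ne(d_j,d_j)$, the divisor $X_j\cap\mathcal{Q}'$ cannot be supported on twistor lines, contradicting the previous paragraph. Therefore no component is a quadric and every $X_i$ is a plane.

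Finally, with all components planes $H_1,\dots,H_d$, each $H_i$ contains a unique twistor line $L_i$, and for $i\ne j$ the line $H_i\cap H_j$ must itself be twistor (otherwise $\pi(H_i\cap H_j)$ is a round $2$-sphere, of dimension $2$). Being a twistor line contained in both $H_i$ and $H_j$, uniqueness forces $L_i=H_i\cap H_j=L_j$; running this over all pairs gives $L_1=\cdots=L_d=:L$ and $H_i\cap H_j=L$ for all $i,j$, i.e. all the planes lie in the pencil through the twistor line $L$. This is exactly the configuration of Proposition \ref{e1}, which then yields $\mathrm{Disc}(X)=\{\pi(L)\}$, and in particular $|\mathrm{Disc}(X)|=1$.
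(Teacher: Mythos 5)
Your proof is correct and follows essentially the same route as the paper's: decompose $X$ into integral components, classify each component via Theorem~\ref{a1} as a plane or a $j$-invariant nonsingular quadric, note that each $X_i\cap X_j$ lands in $\mathrm{Disc}(X)$ and so must be a union of twistor lines, kill the quadric case by the bidegree computation in $\operatorname{Pic}(\PP^1\times\PP^1)$ (the paper phrases this as $X_1\cap X_2\in|\Oo_{X_1}(e,e)|$ versus twistor lines lying in the $(1,0)$ ruling), and conclude that all components are planes through a common twistor line. You merely spell out details the paper leaves implicit, such as the inclusion $\mathrm{Disc}(X)\supseteq\bigcup_i\mathrm{Disc}(X_i)\cup\bigcup_{i<j}\pi(X_i\cap X_j)$.
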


\begin{proof}
Since $\mathrm{Disc} (X)\ne S^4$, $X$ has no multiple component. Call $X_1,\dots ,X_h$, $h\ge 2$, the irreducible components of $X$. Since $ \mathrm{Disc}(X_i)\subseteq \mathrm{Disc} (X)$,
each $X_i$ is either as in Proposition \ref{e1} or as in Theorem~\ref{a1} case (2), i.e. $\mathrm{Disc} (X_i) =S^1$ and $X_i$ is conformally equivalent to the Segre quadric $\mathcal{Q}$. First assume that one of the components, say $X_1$, is equivalent to $\Qq$. Call $|\Oo _{X_1}(1,0)|$ the pencil of lines of $X_1$ containing the twistor lines \cite[Section 7.3]{sv1}. The component
$X_2$ is either as in Proposition \ref{e1} or, again, equivalent to $\Qq$. Set $e:= \deg (X_2)\in \{1,2\}$. The scheme $X_1\cap X_2$ is an element of $|\Oo _{X_1}(e,e)|$ and hence
it cannot be a union of twistor lines with certain multiplicities. Thus $\mathrm{Sing}(X)$ contains a complex curve which is not a union of twistor lines. Thus $\dim(\mathrm{Disc} (X))\ge 2$,
a contradiction. Now assume that each $X_i$ is a plane. As above we see that each $X_i\cap X_j$, $1\le i<j\le h$, is a twistor line. Since each plane contains a unique twistor line, $X$ is as in Proposition \ref{e1}.
\end{proof}

Thanks to these two results it is possible to state the following corollary which show a peculiar behavior of algebraic OCS's.

\begin{corollary}
Up to conformal transformations, the circle is the only possible 1-dimensional twistor discriminant locus of an algebraic OCS on $S^{4}$.
\end{corollary}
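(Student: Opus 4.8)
The plan is to reduce the statement to the classification already obtained in Theorem~\ref{a1} and Proposition~\ref{e3}, after first producing the algebraic surface that governs the OCS. First I would take an algebraic OCS $J$ defined on a domain $\Omega\subseteq S^{4}$. By Definition~\ref{aocs} its graph $J(\Omega)$ lies inside an algebraic manifold; since $\Omega$ is connected, $J(\Omega)$ is an irreducible holomorphic submanifold of the twistor space, and I would let $X\subset\CC\PP^{3}$ be its Zariski closure. Then $X$ is an \emph{integral} algebraic surface and $J$ is one branch of the (generically $d$-to-$1$) map $\pi|_{X}$. The set on which this branch extends to a single-valued OCS is precisely $S^{4}\setminus\mathrm{Disc}(X)$, so the twistor discriminant locus attached to the OCS is exactly $\mathrm{Disc}(X)$ for this integral $X$.

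With $X$ integral in hand, the corollary follows directly from Theorem~\ref{a1}. Assuming $\dim(\mathrm{Disc}(X))=1$, the trichotomy of Theorem~\ref{a1} leaves only case (2): case (1) forces $\mathrm{Disc}(X)$ to be a single point, and case (3) forces pure dimension $2$. Hence $X$ is conformally equivalent to the Segre quadric $\Qq$ and $\mathrm{Disc}(X)$ is conformally equivalent to a round $S^{1}\subset S^{4}$, which is the desired conclusion.

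For completeness I would also dispose of reducible surfaces, so that the reduction to the integral case is fully justified. If the graph were contained in a reducible surface $X$ with $\dim(\mathrm{Disc}(X))\le 1$, then Proposition~\ref{e3} shows that $\mathrm{Disc}(X)$ is a single point, hence $0$-dimensional and never genuinely $1$-dimensional. Combining this with the integral case above, the only way to realize a $1$-dimensional twistor discriminant locus is the Segre quadric configuration, which yields the circle.

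The step I expect to require the most care is the first one: checking that the surface attached to the OCS may be taken integral, and that its discriminant locus really coincides with the complement of the maximal domain of $J$. Concretely, one must verify that the graph, being the image of a connected domain, has irreducible Zariski closure, and that over $\Omega$ itself no merging of sheets or singular behaviour of $\pi|_{X}$ occurs, so that $\Omega$ is recovered as $S^{4}\setminus\mathrm{Disc}(X)$. Once this identification is secured, the conclusion is a direct reading of Theorem~\ref{a1} together with Proposition~\ref{e3}.
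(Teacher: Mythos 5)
Your argument matches the paper's: the corollary is stated there as an immediate consequence of Theorem~\ref{a1} (integral case: dimension $1$ forces conformal equivalence to the Segre quadric, whose discriminant locus is a round circle) together with Proposition~\ref{e3} (reducible case: dimension $\le 1$ forces a single point), which is exactly the case analysis you carry out. Your extra care in identifying the governing integral surface as the Zariski closure of the graph is a reasonable elaboration of what the paper leaves implicit, and introduces no gap.
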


%

\section{Stability of the Discriminant Locus}\label{S2}
In this section we analyze the possible intersections between a general surface $X$ and the set of twistor lines.
We will show that only few configurations are allowed. For any of these configurations it is possible to compute their 
dimension. 
Let us denote by $G(2,4)$ the Grassmannian of lines in $\CC\PP^{3}$ and by $\sigma:S^{4}\to G(2,4)$ the $\mathcal{C}^{\infty}$ embedding
of twistor lines, i.e. $\sigma(q):=\pi^{-1}(q)$ (see~\cite{altavillatwistor,gensalsto} for an explicit definition of $\sigma$).  We recall that $\dim(G(2,4))=4$. Note that $\sigma(S^4)$ is a compact differential submanifold of $G(2,4)$ with real codimension equal to $4$. 
The action of $SL(4,\CC)$ on $\CC\PP^3$ is transitive on the set of lines, hence it induces a natural action on $G(2,4)$.

%
%


Our main result is based on the following two remarks.

\begin{remark}\label{a00} Since the action of $SL(4,\CC)$ on $G(2,4)$ is transitive, then, for each $L\in G(2,4)$, the map $SL(4,\CC)\to G(2,4)$ given by $g\mapsto g(L)$ is a submersion.
Thus the map $\phi : SL(4,\CC)\times S^4\to G(2,4)$ defined by $\phi (g,q) = g(\sigma(q))$ is a submersion too. Hence for each locally closed connected complex differential submanifold $T\subset G(2,4)$, $\phi ^{-1}(T)$
is smooth and it has codimension $4-\dim (T)$ (\cite[Theorem 2.8]{gg}). Hence, given a general $g\in SL(4,\CC)$,
 $g(T)$ intersects transversally $\sigma(S^4)$. Thus if $\dim_{\CC} (T)\le 1$, then $g(T)\cap S^4=\emptyset$ for a general $g\in SL(4,\CC)$, while
if $\dim (T)\ge 2$, then $g(T)\cap \sigma(S^4)$ has pure dimension $\dim (T) -2$
(it could be disconnected, a priori).
\end{remark}

We now pass to describe the possible intersections between a general surface and a line.
Fix an integer $d\ge 2$ and let $\alpha=(m_{1},\dots,m_{h})$, with $m_1\ge \cdots \ge m_h$, be a partition of $d$, i.e. $h$ is a positive integer, the sequence $m_i$ is non-decreasing, $m_h>0$ and
$m_1+\cdots +m_h =d$. For any line $L\subset \CC\PP^3$ and any zero-dimensional scheme $Z\subset L$ let $\alpha (Z)$ be the partition of $d$ obtained in the following
way. Set $S:= Z_{\mathrm{red}}$ and $h:= |S|$. For each $q\in S$ let $m_q$ be the multiplicity of $Z$ at $q$, i.e. write $Z = \sum _{q\in S} m_qq$ as effective divisor of $L\cong \CC\PP^1$.
We order the point of $S$, say $S = \{q_1,\dots ,q_h\}$ so that $m_i\ge m_j$ for all $i\le j$. Define  $\alpha (Z) = (m_{1},\dots,m_{h})$.

\begin{remark}\label{a01}
For any partition $\alpha$ of $d$ let $L(\alpha )$ be the set of all zero-dimensional schemes $Z\subset L$ such that $\deg (Z)=d$, $\alpha (Z) =(m_{1},\dots,m_{h})$ and $m_{1}+\dots m_{h}=d$. Since the set of all $h$-ples
of distinct points of $L$ is a smooth complex manifold with complex dimension $h$, $L(\alpha )$ is a complex manifold with complex dimension $h$. Note that the set $S^dL$
of all degree $d$ effective divisors of $L$ is a smooth compact manifold of complex dimension $d$. We see $L(\alpha )$ as a locally closed complex submanifold of $S^dL$
with pure complex codimension $d-h$.
\end{remark}

%

We now prove the main result of this section.

\begin{theorem}\label{a2}
Let $X\subset \CC\PP^3$ be a general degree $d\ge 4$ surface. Then $X$ contains no line and for any $L\in G(2,4)$ the partition $\alpha (L\cap X)$ is either $(1,\dots ,1)$
occurring in a Zariski open subset of $G(2,4)$, or one of the following:
\begin{itemize}
\item $(2,1,\dots ,1)$ occurring on a locally closed hypersurface $T_{1}^{1}\subset G(2,4)$;
\item $(2,2,1,\dots ,1)$ occurring in a $2$ dimensional locally
closed complex subvariety $T_{2}^{1}\subset G(2,4)$;
 \item $(3,1,\dots ,1)$ occurring in a $2$ dimensional locally
closed complex subvariety $T_{2}^{2}\subset G(2,4)$;
\item $(3,2,1,\dots ,1)$ occurring in a $1$-dimensional locally
closed complex subvariety $T_{3}^{1}\subset G(2,4)$;
\item $(4,1,\dots ,1)$ occurring in a $1$-dimensional locally
closed complex subvariety $T_{3}^{2}\subset G(2,4)$;
\item $(2,2,2,2,1,\dots ,1)$ occurring in a finite set $T_{4}^{1}\subset G(2,4)$;
\item $(3,2,2,1,\dots ,1)$ occurring in a finite set $T_{4}^{2}\subset G(2,4)$;
\item  $(4,2,1,\dots ,1)$ occurring in a finite set $T_{4}^{2}\subset G(2,4)$;
\item $(5,1,\dots ,1)$ occurring in a finite set $T_{4}^{2}\subset G(2,4)$.
\end{itemize}
\end{theorem}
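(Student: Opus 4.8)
The plan is to realize the stratification of $G(2,4)$ by the intersection type $\alpha(L\cap X)$ as the pullback of the universal stratification on the bundle of degree-$d$ divisors on lines, and to read off all dimensions from a single incidence variety over the parameter space of surfaces. Write $\PP^N$, with $N=\binom{d+3}{3}-1$, for the space of degree-$d$ surfaces of $\CC\PP^3$. First I would dispose of the claim that $X$ contains no line. Consider $\{(L,X)\in G(2,4)\times\PP^N : L\subseteq X\}$ and project to $G(2,4)$. Restriction $H^0(\Oo_{\CC\PP^3}(d))\to H^0(\Oo_L(d))$ is surjective and $h^0(\Oo_L(d))=d+1$, so the fibre over each $L$ is a linear subspace of codimension $d+1$; hence this incidence has dimension $4+N-(d+1)=N+3-d$, which is strictly less than $N$ precisely because $d\ge 4$. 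Its image in $\PP^N$ is thus a proper closed subset, so a general $X$ contains no line. Consequently, for general $X$ and every $L\in G(2,4)$ the scheme $L\cap X$ has finite length $d$ and $\alpha(L\cap X)$ is well defined.

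Fix now a partition $\alpha=(m_1,\dots,m_h)$ of $d$ and set
\[ I_\alpha:=\{(L,X)\in G(2,4)\times\PP^N : \alpha(L\cap X)=\alpha\}. \]
For fixed $L$ the assignment $X\mapsto L\cap X$ is the dominant linear projection $\rho_L:\PP^N\dashrightarrow S^dL\cong\PP^d$ induced by restriction to $L$, with centre $\{X\supseteq L\}$. By Remark~\ref{a01}, $L(\alpha)\subset S^dL$ is locally closed of pure codimension $d-h$; since $L\cap X$ being finite forces $L\not\subseteq X$, the fibre of $I_\alpha\to G(2,4)$ over $L$ is exactly $\rho_L^{-1}(L(\alpha))$, locally closed of pure codimension $d-h$ in $\PP^N$. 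As this is uniform in $L$,
\[ \dim I_\alpha=\dim G(2,4)+N-(d-h)=N+4-(d-h). \]
Write $j:=d-h=\sum_i(m_i-1)$ for the \emph{defect} of $\alpha$.

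Projecting $I_\alpha\to\PP^N$ recovers the desired stratum: for fixed $X$ the fibre is exactly $T_\alpha(X)=\{L:\alpha(L\cap X)=\alpha\}$. If $j>4$ then $\dim I_\alpha<N$, so this projection is not dominant and $T_\alpha(X)=\emptyset$ for $X$ outside a proper closed subset; hence for general $X$ only partitions of defect $j\le 4$ occur, and these are finite in number. Grouping the partitions of $d$ with $\sum_i(m_i-1)\le 4$ by their defect $j\in\{0,1,2,3,4\}$ reproduces the cases of the statement, the strata of common defect $j$ being collected as the $T_j^{\bullet}$. For each such $\alpha$, once the projection is dominant the theorem on fibre dimensions gives the expected pure dimension
\[ \dim T_\alpha(X)=\dim I_\alpha-N=4-j, \]
namely the Zariski-open locus for $j=0$, a hypersurface for $j=1$, a surface for $j=2$, a curve for $j=3$, and a finite set for $j=4$.

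The technical heart is precisely the last verification. The dimension bookkeeping already yields $\dim T_\alpha(X)\le 4-j$ and emptiness for $j>4$; what remains is to rule out fibre-dimension jumping and to exhibit, for each listed $\alpha$, one surface realizing the stratum in the expected dimension. Over $\CC$ the clean route is generic smoothness applied to $I_\alpha\to\PP^N$: it suffices to produce a single $(L_0,X_0)\in I_\alpha$ at which the differential of the projection is surjective — equivalently, a line with contact profile $\alpha$ whose first-order deformations as a member of $|\Oo(d)|$ span all of $\PP^N$ — whence the projection is dominant and its general fibre is smooth of pure dimension $4-j$. I expect this to be the main obstacle: the simply-tangent strata $(2,1^{\dots})$ and $(2,2,1^{\dots})$ follow from the classical smoothness of the tangent-line complex and of the bitangent surface of a general surface in $\CC\PP^3$, whereas the higher-contact types (the order-five contact $(5,1^{\dots})$ and the mixed flex configurations $(3,2,\dots)$, $(4,2,\dots)$) require a short explicit osculation computation to produce a transverse point; the rest of the argument is routine dimension count resting on Remark~\ref{a01} and the surjectivity of restriction.
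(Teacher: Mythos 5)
Your proof is correct and follows essentially the same route as the paper's: an incidence variety $\Sigma_{d,\alpha}\subset G(2,4)\times \PP^N$ whose fibres over $G(2,4)$ have codimension $d-h$ by Remark~\ref{a01} and surjectivity of restriction to $L$, followed by projection to the space of surfaces to exclude the partitions of defect $d-h>4$ and to read off the generic fibre dimension $4-(d-h)$ for the rest. The dominance/non-emptiness verification you single out as ``the technical heart'' is not actually required: the paper explicitly disclaims non-emptiness of the strata immediately after the proof, so the statement only asserts the dimensions of the loci when they occur, which your fibre-dimension count already delivers.
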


\begin{proof}
For any $d\ge 4$ it is easy to check that the set of all degree $d$ surfaces containing at least one line has codimension $d-3$ in the projective space of all degree $d$ surfaces, even the singular or reducible ones (see \cite{xu} for a deeper result; for very general surfaces it is also a consequence of Noether-Lefschetz theorem). In particular a general surface of degree $>3$ contains no line. 

Let us now consider, in the projective space of all degree $d$ surfaces in $\CC\PP^{3}$, the non-empty Zariski open 
set $\Uu_{d}$ of smooth surfaces containing no line. 
Let now $L$ be any line and $\alpha$ be any partition of $d$.
In $\Uu_{d}$ we identify with $\Sigma_{L, d, \alpha}$, the family of surfaces $S$ such that $S\cap L=\alpha$. 
We know that $\Sigma_{L, d, \alpha}$ has codimension $d-h$ in $\Uu_{d}$. 
Consider now the set $\Sigma _{d,\alpha} \subset G (2,4)\times\Uu _d$ composed by pairs $(L,S)$ such that $S\cap L=\alpha$. Clearly, the codimension of $\Sigma _{d,\alpha}$ in $G (2,4)\times\Uu _d$ is again $d-h$. 
Therefore the projection in the second factor $p_{2}(\Sigma _{d,\alpha})$ has codimension, at least, $d-h-4$.
Hence, if $d-h>4$, then $\Uu_{d}\setminus p_{2}(\Sigma _{d,\alpha})$ contains a non-empty Zarisky open subset
$V_{d,\alpha}$, such that no $S\in V_{d,\alpha}$ has partition $\alpha$ with respect to any line.

Therefore, we have that $d-4\le h\le d$ and so all the possible partitions are those listed in the statement.
%
%
%
%
%
%
%
%
%

\end{proof}

Notice that we do not claim non-emptiness for the partitions occurring in Proposition \ref{a2}. Of course, when $d=4$ the partition $(5,1,\dots ,1)$ never occurs as well as when $d=3$ no partition $(2,2,1,\dots ,1)$ or higher occur.

We pass now to the twistor case. It is clear that, since the set of twistor lines $\sigma(S^{4})$ is a submanifold of $G(2,4)$, then the previous result will return a much more refined list. 

\begin{corollary}\label{a3}
Let $X$ be a general degree $d\ge 4$ surface. Then $X$ contains no twistor line and for every twistor line $L$ either $\alpha (L\cap X)=(1,\dots ,1)$ occurring in a Zariski open subset of $\sigma(S^{4})\subset G(2,4)$, or one of the following:
\begin{itemize}
\item $\alpha (L\cap X)=(2,1,\dots ,1)$ occurring on a $2$-dimensional linear subvariety of $\sigma(S^{4})$;
\item $\alpha (L\cap X) =(2,2,1,\dots ,1)$ occurring only for finitely many twistor lines;
\item $\alpha (L\cap X)=(3,1,\dots ,1)$ occurring only for finitely many twistor lines.
\end{itemize}
\end{corollary}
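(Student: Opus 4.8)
The plan is to deduce Corollary~\ref{a3} from Theorem~\ref{a2} by intersecting the stratification of $G(2,4)$ with the twistor submanifold $\sigma(S^4)$, using the transversality principle recorded in Remark~\ref{a00}. First I would observe that ``$X$ contains no twistor line'' is immediate: by Theorem~\ref{a2} a general degree $d\ge 4$ surface contains no line at all, and twistor lines are in particular lines. The substance of the corollary is therefore the dimension count for each partition type, and here the key is that the loci $T_j^i\subset G(2,4)$ produced in Theorem~\ref{a2} are (locally closed) complex submanifolds of known complex dimension, while $\sigma(S^4)$ is a \emph{real} submanifold of real codimension $4$ (equivalently, a totally-real-type condition cutting down $4$ real dimensions). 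The engine is Remark~\ref{a00}: after acting by a general $g\in SL(4,\CC)$, each stratum $g(T_j^i)$ meets $\sigma(S^4)$ transversally, so $g(T_j^i)\cap\sigma(S^4)$ has real dimension $2\dim_{\CC}(T_j^i)-4$ (empty when $\dim_{\CC}(T_j^i)\le 1$).

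The main technical point I would address is that the corollary is stated for a general surface $X$, not for a general translate of the strata. So I need to transfer the genericity from the $SL(4,\CC)$-action in Remark~\ref{a00} to the surface $X$ itself. The clean way is to set up an incidence correspondence: for each partition $\alpha$ form
\[
\Sigma_{d,\alpha}^{\mathrm{tw}}:=\{(q,S)\in S^4\times\Uu_d \;:\; \alpha(\sigma(q)\cap S)=\alpha\}\subset S^4\times\Uu_d,
\]
which is the pullback of the $\Sigma_{d,\alpha}$ of Theorem~\ref{a2} along $\sigma\times\mathrm{id}$. Because $\sigma(S^4)$ has real codimension $4$ in $G(2,4)$ and the fibration $\Sigma_{d,\alpha}\to\Uu_d$ of Theorem~\ref{a2} has fibres of the expected real dimension, the fibre of $\Sigma_{d,\alpha}^{\mathrm{tw}}\to\Uu_d$ over a general $S$ has real dimension $\bigl(\dim_{\RR}\text{fibre of }\Sigma_{d,\alpha}\bigr)-4=2h-\,?$; concretely, the fibre over a general $S$ is precisely the set of twistor lines $L$ with $\alpha(L\cap S)=\alpha$, whose real dimension equals $2\dim_{\CC}(T_j^i)-4$ by the transversality of Remark~\ref{a00}. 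This lets me read off, for a general $X$: the hypersurface stratum $T_1^1$ (complex dimension $3$) meets $\sigma(S^4)$ in real dimension $2$, giving the $(2,1,\dots,1)$ locus as a $2$-dimensional piece of $\sigma(S^4)$; the two strata $T_2^1,T_2^2$ of complex dimension $2$ meet $\sigma(S^4)$ in real dimension $0$, i.e.\ finitely many twistor lines, accounting for $(2,2,1,\dots,1)$ and $(3,1,\dots,1)$; and every stratum of complex dimension $\le 1$ (the $T_3^i$ and $T_4^i$) misses $\sigma(S^4)$ after the generic action, so those partitions do not occur for a general $X$.

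The one point requiring care — and the step I expect to be the main obstacle — is the word \emph{linear} in the first bullet: I want the $(2,1,\dots,1)$ locus to be not merely a $2$-dimensional subset of $\sigma(S^4)$ but a ``$2$-dimensional linear subvariety'' in the relevant sense. The partition $(2,1,\dots,1)$ corresponds to the twistor lines simply tangent to $X$, i.e.\ to $\mathrm{Disc}(X)$ away from higher tangencies, and the claimed linearity should be extracted from the structure of $T_1^1$ as the branch locus of the projection together with Theorem~\ref{a1}, which already forces $\mathrm{Disc}(X)$ to be a real algebraic set of pure dimension $2$. I would therefore cross-check the dimension obtained here against Theorem~\ref{a1}: both assert that the generic tangency locus is genuinely $2$-dimensional, which confirms the count and simultaneously identifies the $2$-dimensional piece with (an open part of) $\mathrm{Disc}(X)$. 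Finally I would note, in parallel with the remark following Theorem~\ref{a2}, that non-emptiness of the finite loci $(2,2,1,\dots,1)$ and $(3,1,\dots,1)$ is not claimed, only that \emph{if} they occur they occur for finitely many twistor lines; the small cases $d=4$ (where the deepest partitions of Theorem~\ref{a2} degenerate) are handled exactly as in that theorem, since the dimension inequalities used are uniform in $d\ge 4$.
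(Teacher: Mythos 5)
Your proposal is correct and follows essentially the same route as the paper: combine the dimension counts of Theorem~\ref{a2} with the transversality statement of Remark~\ref{a00} to cut each stratum $T^n_m$ down by four real dimensions inside $\sigma(S^4)$, so that the complex-dimension-$3$ stratum survives as a real surface, the complex-dimension-$2$ strata become finite, and everything of complex dimension $\le 1$ disappears. The one point you gloss over is exactly the single observation the paper's proof adds: the $T^n_m$ are locally closed subvarieties, not necessarily smooth, so Remark~\ref{a00} (which is stated for submanifolds) must be applied stratum by stratum after decomposing each $T^n_m$ into its smooth locus and lower-dimensional pieces; this costs nothing since the dimension bounds only improve on the smaller strata, but you should not assert the $T^n_m$ are submanifolds. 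Your worry about the word ``linear'' in the first bullet is about an imprecision in the statement rather than a gap in your argument --- the paper's proof does not establish (or use) any linearity either, only that the $(2,1,\dots,1)$ locus is a $2$-dimensional subvariety of $\sigma(S^4)$, consistent with Theorem~\ref{a1}.
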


\begin{proof}
We use Remarks \ref{a00} and \ref{a01} and Theorem \ref{a2} with this additional observation. The complex varieties $T_{m}^{n}$, $m,n=1,\dots,4$, occurring in the statement of Proposition \ref{a2} are not necessarily
smooth. Any such variety has a stratification in unions of smooth varieties (with a Zariski open subset consisting of the smooth points of $T_{m}^{n}$, all the other strata having lower dimension).
\end{proof}

\begin{remark}
Let $X$ be degree $d$ surface defined by a square free polynomial $\{f=0\}$.
Our result is, then, compatible with~\cite[Proposition 4.2]{armstrong} in which it is stated that the set of double points $\Dd':=\{q\in \mathrm{Disc}(X)\,|\, |\pi^{-1}\cap X|=d-1\}$ is a smooth orientable real surface.
\end{remark}

\section{Twistor Discriminant Locus of Algebraic Cones}\label{S3}

Let $\mathbb {CP}^{3\vee}$ denote the dual projective space of $\mathbb {CP}^{3}$, i.e. the $3$-dimensional complex projective
space
parametrizing the set of all planes of $\mathbb {CP}^{3}$. 
For any integral closed complex algebraic variety $Y\subset \mathbb {CP}^{3}$, let $Y^\vee \subset \mathbb {CP}^{3\vee}$
denote its dual variety. 
 For definition and simple properties of the dual of a plane curve, see  \cite[page 264]{GH} and \cite[Ex. I.7.3]{h}. For dual of hypersurfaces (resp. arbitrary varieties) in any projective space see \cite[\S 1.2 and 1.3]{dolg}.
 
In this section we give a first account of the twistor discriminant locus of a projective cone $X$ in terms of its dual variety $X^{\vee}$ and of its singular locus. 

Results about ruled surfaces from this point of view were given in~\cite[Section 4]{ab}. For the particular case
of cones, the only known results in our knowledge are~\cite[Theorem 31]{altavillatwistor} about cubic cones and 
their possible parameterization by slice regular functions and
 the conformal classification of quadric cones~\cite{chirka} which is summarized in the following example.
 
 \begin{example}\label{examplecone}
Let $X$ be a quadric cone. We have the following two cases:
\begin{itemize}
\item if $X$ contains a twistor line, then $\mathrm{Disc}(X)$ is a round 2-sphere in $\mathbb{S}^{4}$ and all such cones are conformally equivalent (see~\cite[Statement 7.3.1]{chirka});
\item If $X$ does not contain any twistor line, then $\mathrm{Disc}(X)$ is homeomorphic to a $2$-sphere with two identified points. In this case there is a 1-dimensional continuous family of conformally inequivalent cones (see~\cite[Statement 7.3.2]{chirka}).
\end{itemize}
 \end{example}

In view of the main result of this section, we need to recover the standard twistor fibration in terms of $\CC\PP^{3\vee}$.
We recall the definition of the \textit{inclusion variety} (or \textit{incidence correspondence}),
as the set $A$ defined by
$$
A:=\{(L,H)\in G(2,4)\times \CC\PP^{3\vee}\,|\, L\subset H\}.
$$
In this set we identify the family of twistor lines as 
$$
B:=\{(L,H)\in A\,|\, L\in\sigma(S^{4})\}.
$$
The projections onto the two factors $A\to G(2,4)$ and $A\to \CC\PP^{3\vee}$ induce complex algebraic maps
$$
\eta_{1}:B\to\sigma(S^{4}),\qquad \eta_{2}:B\to\CC\PP^{3\vee},
$$
which are locally trivial fibrations (in the
Zariski topology and hence in the euclidean topology too), with fiber biholomorphic to $\mathbb {CP}^{1}$
and $\mathbb {CP}^{2}$, respectively. Since each plane $H$ contains a unique twistor line $L$, the map
$\eta_{2}$ is proper and bijective and hence a homeomorphism. Moreover, since $\sigma$ is real algebraic,
then $\eta_{2}$ is also real algebraic and, being bijective with a smooth domain and a smooth target, it is a diffeomorphism.

The dual twistor fibration is then given by the map $\eta:\CC\PP^{3\vee}\to S^{4}$, defined by
$$
\eta:=\sigma^{-1}\circ\eta_{1}\circ \eta_{2}^{-1}.
$$
Since, for any line $L$, the set of planes $H$ such that $L\subset H$ is biholomorphic to $\CC\PP^{1}$, then the 
fibers of $\eta$ are biholomorphic to $\CC\PP^{1}$. Notice that, for each plane $H\subset \CC\PP^{3}$, the set of 
all lines contained in $H$ is biholomorphic to $\CC\PP^{2}$ and this is, in fact, $H^{\vee}$.

The map $\eta$ is then a locally trivial
fibration (in the euclidean topology) real algebraic with fibers homeomorphic to $\mathbb {CP}^{1} \cong S^2$. Thus for each
closed set $\Sigma \subseteq \mathbb {CP}^{3\vee}$ (in the Zariski or the euclidean topology), we get a compact subset $\eta (\Sigma)\subseteq S^4$.
If $\eta _{|\Sigma}$ is injective, then $\eta _{|\Sigma}: \Sigma \to \eta (\Sigma )$ is a homeomorphism, because it is continuous and closed (since $\Sigma$ is compact
and $S^4$ is Haussdorff).

We now set the notation used in the reminder of the section. 
Let $X$ be an integral cone surface of degree $d>1$ and call $o\in\CC\PP^{3}$ its vertex. 
Take any plane $H\subset \CC\PP^{3}$ such that $o\notin H$ and denote by $M_{o}\subset\CC\PP^{3\vee}$ the hyperplane of $\CC\PP^{3\vee}$
formed by all planes of $\CC\PP^{3}$ containing $o$. 
Consider the integral curve $D:= H\cap X$. We have that $deg (D)= deg (X)$. 

Notice that every line contained in $X$
pass through $o$ and since two different twistor lines are disjoint, then $X$ contains at most one twistor line $\mathcal L\ni o$.

For any $a\in X\setminus \{o\}$ let $L_a$ be the line spanned by $\{a,o\}$. Since $X$ is a cone with vertex $o$, the point $a$ is a smooth point of $X$ if and only if $L_a\cap H$ is a smooth point of $D$. Thus if $a$ is a smooth point of $X$, then $X$ is smooth at each point $b\in L_a\setminus \{o\}$, the tangent planes $T_aX$ and $T_bX$ are the same and they are the planes spanned by $o$ and the tangent line of $D$ at the point $L_a\cap H$.

Thus the dual variety $X^{\vee}\subset\CC\PP^{3\vee}$ can be identified with $D^{\vee}\subset M_{o}$, up to the identification of $H^{\vee}$
with $M_{o}$, i.e. $X^\vee$ is the cone with vertex $o$ and $D^{\vee}\subset H^\vee$ as its basis.

We are now in position to state our main result.

\begin{theorem}\label{bbo2}

Let $X\subset \mathbb {CP}^{3}$ be an integral cone surface of degree $d>1$. Call $o\in \mathbb {CP}^{3}$ the vertex of $X$. We have the following equality
$$
\mathrm{Disc}(X)=\eta (X^{\vee})\cup \pi(\mathrm{Sing} (X)\setminus\{o\}).
$$
\end{theorem}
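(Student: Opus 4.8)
The plan is to prove the set equality $\mathrm{Disc}(X)=\eta(X^{\vee})\cup\pi(\mathrm{Sing}(X)\setminus\{o\})$ by a double inclusion, organizing everything around a single twistor line $L=\pi^{-1}(q)$ and its relationship to the cone $X$. Recall from Remark~\ref{sing} that $\mathrm{Disc}(X)$ is the projection of the twistor lines that are either tangent to $X$ or pass through a singular point of $X$, together with any twistor lines contained in $X$. I would first dispose of the point $\pi(\{o\})$: the vertex $o$ always lies in $\mathrm{Sing}(X)$ (for $d>1$), so $\pi(o)$ must be shown to belong to $\eta(X^{\vee})$. This is where the identification $X^{\vee}=$ (cone over $D^{\vee}$ with vertex $o$) established just before the statement becomes useful, together with the dual fibration $\eta=\sigma^{-1}\circ\eta_1\circ\eta_2^{-1}$.

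The first main inclusion is $\mathrm{Disc}(X)\subseteq\eta(X^{\vee})\cup\pi(\mathrm{Sing}(X)\setminus\{o\})$. Take $q\in\mathrm{Disc}(X)$ and let $L=\pi^{-1}(q)$. By the definition of the discriminant locus, $|L\cap X|\ne d$, so either $L$ meets $X$ at fewer than $d$ distinct points or $L\subset X$. If $L$ meets a singular point $p\in\mathrm{Sing}(X)$ with $p\ne o$, then $q\in\pi(\mathrm{Sing}(X)\setminus\{o\})$ and we are done. Otherwise the deficiency must come from $L$ being tangent to $X$ at some smooth point $a$, i.e. $L\subset T_aX$. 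The key observation is that $T_aX$ is a plane $H_a$ containing $L$, so the pair $(L,H_a)$ lies in the incidence variety $A$, and since $L$ is a twistor line, $(L,H_a)\in B$. Because $H_a=T_aX\in X^{\vee}$, applying $\eta$ gives $q=\eta(H_a)\in\eta(X^{\vee})$. The case $L\subset X$ needs separate handling: a twistor line contained in a cone passes through $o$ (all lines in a cone do), so this forces $L=\mathcal{L}$, the unique twistor line through $o$, and one checks $q=\pi(\mathcal{L})=\pi(o)\in\eta(X^{\vee})$ by the vertex computation above.

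For the reverse inclusion $\eta(X^{\vee})\cup\pi(\mathrm{Sing}(X)\setminus\{o\})\subseteq\mathrm{Disc}(X)$, the second half is immediate from Remark~\ref{sing}: any $p\in\mathrm{Sing}(X)\setminus\{o\}$ gives $\pi(p)\in\mathrm{Disc}(X)$ because the twistor line through such a singular point meets $X$ non-transversally. For the first half, take $q\in\eta(X^{\vee})$, so $q=\eta(H)$ for some plane $H\in X^{\vee}$, and let $L=\pi^{-1}(q)$ be the unique twistor line contained in $H$ (recall $\eta_2$ identifies $H$ with its twistor line). Since $H$ is a tangent plane to $X$, it is tangent at some smooth point $a$, meaning $H\cap X$ is singular at $a$, or more precisely $H=T_aX$ along the whole ruling $L_a$. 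I would then argue that the twistor line $L\subset H=T_aX$ meets $X$ with a tangency, so $|L\cap X|<d$, giving $q\in\mathrm{Disc}(X)$; the cone structure is what guarantees $L$ (lying in the tangent plane) actually picks up a multiple intersection point with $X$.

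The main obstacle I anticipate is the careful analysis of the case $L\subset H=T_aX$ in the reverse inclusion: I must verify that a twistor line lying inside a tangent plane of the cone genuinely intersects $X$ with total multiplicity $<d$ and is not merely a line of the plane that happens to avoid the tangency locus. Here the geometry of the cone — that $T_aX$ is constant along each ruling $L_a$ and is spanned by $o$ and the tangent line to $D=H'\cap X$ — should force the scheme $L\cap X$ to have a point of multiplicity at least $2$ (coming from the direction of tangency), but making this precise, and simultaneously treating the degenerate subcase where $L$ passes through $o$, is the delicate part. A clean way to handle it is to project from $o$: the tangency data of $X$ along $L_a$ corresponds exactly to the tangency of the plane curve $D$ at $L_a\cap H'$, so I would reduce the multiplicity count to the classical statement that a line in the tangent direction to $D^{\vee}$ meets $D$ with multiplicity $\ge 2$, then transport this back up the cone to the twistor line $L$.
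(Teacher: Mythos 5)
Your overall strategy is the same as the paper's: a double inclusion organized around the key geometric fact that the tangent plane of a cone is constant along each ruling $L_a$, together with the identification of $X^\vee$ with the dual of the plane section $D$. Your reverse inclusion is sound: your projection-from-$o$ argument is a (slightly heavier) repackaging of the paper's more direct observation that any line $R\subset T_aX$ with $o\notin R$ must meet $L_a$ at some smooth point $b\neq o$, where $T_bX=T_aX\supset R$, so $R$ is genuinely tangent to $X$ at $b$; you do not need to compute multiplicities of $D^\vee$ for this. The paper then closes the argument by invoking the pure-dimension result (Lemma~\ref{aa2}), whereas your direct set-theoretic bookkeeping makes that unnecessary; that is a reasonable simplification.

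The one genuine gap is the ``vertex computation'' that you announce at the outset and then rely on twice but never perform. It is a real obligation: $\pi(o)\in\mathrm{Disc}(X)$ always (the vertex has multiplicity $d$, so the twistor line $\mathcal{L}\ni o$ meets $X$ in fewer than $d$ points, or lies in $X$), yet the right-hand side deliberately removes $o$ from the singular locus, so when $\mathrm{Sing}(X)=\{o\}$ the equality forces $\pi(o)\in\eta(X^\vee)$ --- and this also covers the case you omit from your case analysis, namely $L=\mathcal{L}$ with $\mathcal{L}\not\subset X$ and no tangency at a smooth point. The missing argument is short but not automatic: the planes containing $\mathcal{L}$ form a pencil, i.e.\ a line in $\CC\PP^{3\vee}$, and since $o\in\mathcal{L}$ this pencil lies inside $M_o\cong\CC\PP^2$; the dual $X^\vee$ is a curve in $M_o$, and a line and a curve in a projective plane always meet, so some tangent plane $H\in X^\vee$ contains $\mathcal{L}$ and $\eta(H)=\pi(o)$. (If $\mathcal{L}\subset X$ it is a ruling, so either it contains a smooth point $a$ with $T_aX\supset\mathcal{L}$, or $\mathcal{L}\subset\mathrm{Sing}(X)$ and $\pi(o)\in\pi(\mathrm{Sing}(X)\setminus\{o\})$ anyway.) With this supplied, your proof is complete; note that the paper itself dispatches this point with a bare ``clearly.''
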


\begin{proof}
A line $R\subset \mathbb {CP}^{3}$ is tangent to $X$
at a smooth point $a$ of $X$ if and only if it is contained in $T_aX$. 
Take any line $R\subset T_aX$. Since $T_aX$ is a plane, we have $R\cap L_a\ne \emptyset$. Assume for the moment $o\notin R$. We get that
$R\cap L_a$ is some $b\in L_a\setminus \{o\}$ and so it is a smooth point of $X$. Since $T_aX =T_bX$, $R$ is 
tangent to $X$ at $b$. Remember that every plane contains a unique twistor line. We get that a twistor line is tangent to 
$X$ at some smooth point if and only in it is contained in some tangent plane $T_aX$, $a\in D_{\mathrm{reg}}$ and 
$R\ne L_a$. Since by the definition $\mathrm{Disc} (X)$ is closed and each plane contains a unique twistor line, we get 
that $\eta (X^\vee)\subseteq \mathrm{Disc} (X)$. If now $o\in R$ and $R$ is a twistor line, then clearly $R\subset T_{a}X$.
Since by Lemma~\ref{aa2} $\mathrm{Disc} (X)$ has pure real 
dimension $2$ and any twistor line through a singular point of $X$ is contained in $\mathrm{Disc} (X)$, we get that 
$\mathrm{Disc} (X)= \eta (X^\vee )\cup \pi (\mathrm{Sing}(X)\setminus \{o\})$.\end{proof}

Notice that if $o$ is the unique singular point of $X$ we get $\mathrm{Disc} (X) = \eta (X^\vee )$ i.e.: the projection of
the entire dual variety. 

Given any integral plane curve $D$, we have $\deg (D^\vee )\le d(d-1)$ with equality if $D$ is smooth (see \cite[Ex. IV.2.3 or V.1.5.1]{h} for the smooth case). A precise formula for $\deg (D^\vee )$ is given in \cite[p. 280]{GH} under strong assumption on the singularities and inflectional points of $D$. 

\begin{remark}
As in the proof of Theorem~\ref{bbo2}, let $D$ be the integral plane curve defined by $D=H\cap X$ and $\mathcal{L}$ be the unique twistor line through $o$. Set $\{q\}:= H\cap \mathcal{L}$. If  $L\not\subset X$, then  $q\notin D$ and there are at most $d(d-1)$
tangent lines of $D_{\mathrm{reg}}$ passing through $q$. Hence the fiber of $\eta|_{X^{\vee}}$ over $\pi(\mathcal{L})\in S^4$ in the statement of Theorem \ref{bbo2} has at most cardinality $d(d-1)$.
Assuming that $d=2$ we have that $D$ is a smooth plane conic. To recover the result in Example~\ref{examplecone} it is sufficient to observe that for every $p\notin D$ there are exactly $2$ lines of $H$ passing through
$p$ and tangent to the smooth conic $D$. In other terms, for any quadric cone that does not contain any twistor line,
there are exactly two different tangent planes intersecting at $\mathcal{L}$.

Therefore, if $o$ is the unique singular point of $X$ and $\mathcal{L}\not\subset X$, then $\eta|_{X^{\vee}}$ is a homeomorphism
outside a finite set, namely, outside $\eta^{-1}(\pi(o))$. Moreover, if the twistor line $\mathcal{L}\ni o$ is contained in $X$, then (following the proof of Theorem~\ref{bbo2}),
$\eta|_{X^{\vee}}$ is injective and, therefore, a homeomorphism onto its image.
\end{remark}

\section{Concluding Remark}

In this short paper, together with~\cite{ab}, we showed how classical methods from algebraic geometry may be
adapted in the study of (algebraic) twistor spaces. The results of Section~\ref{S1} give strong necessary conditions
on the branching locus of a surface projecting on $S^{4}$. Section~\ref{S2} tells us what to expect in the analysis
of the twistor discriminant locus of a given surface and the last Section~\ref{S3} set a possible groundwork in the
conformal classification of algebraic cones. In the future we plan to exploit these kind of techniques to other ``algebraic''
situations as the cases of $\CC\PP^{2}$ or higher dimensional spheres.

\end{document}